\newtheorem{proposition}{Proposition}
\newtheorem{theorem}[proposition]{Theorem}
\newtheorem{lemma}[proposition]{Lemma}
\theoremstyle{remark}
\newtheorem{remark}[proposition]{Remark}
\theoremstyle{definition}
\numberwithin{equation}{section}
\numberwithin{proposition}{section}
\newcommand{\N}{\mathbb{N}}
\newcommand{\R}{\mathbb{R}}
\newcommand{\E}{\mathbb{E}}
\renewcommand{\P}{\mathbb{P}}
\newcommand{\ep}{\varepsilon}
\newcommand{\eps}{\varepsilon}
\renewcommand{\le}{\leqslant}
\renewcommand{\ge}{\geqslant}
\renewcommand{\bar}{\overline}
\newcommand{\ov}{\overline}
\newcommand{\td}{\widetilde}
\renewcommand{\hat}{\widehat}
\newcommand{\Ll}{\left}
\newcommand{\Rr}{\right}
\renewcommand{\d}{\mathrm{d}}
\newcommand{\1}{\mathbf{1}}
\newcommand{\mcl}{\mathcal}
\newcommand{\al}{\alpha}
\newcommand{\be}{\beta}
\newcommand{\ga}{\gamma}
\newcommand{\de}{\delta}
\begin{document}

\author{Jean-Christophe Mourrat}
\address[Jean-Christophe Mourrat]{Department of Mathematics, ENS Lyon and CNRS, Lyon, France}

\keywords{}
\subjclass[2010]{}
\date{\today}
% MSC classes: 60B20

\title[Operator $\ell^\infty \to \ell^\infty$ norm of products of random matrices]{Operator $\ell^\infty \to \ell^\infty$ norm of products of random matrices with iid entries}

\begin{abstract}
We study the $\ell^\infty \to \ell^\infty$ operator norm of products of independent random matrices with independent and identically distributed entries. For $n$-by-$n$ matrices whose entries are centered, have unit variance, and have a finite moment of order $4\al$ for some $\al > 1$, we find that the operator norm of the product of $p$ matrices behaves asymptotically like $n^{\frac {p+1}{2}}\sqrt{2/\pi}$. The case of products of possibly non-square matrices with possibly non-centered entries is also covered. 
\end{abstract}

\maketitle

%
%
%
%%%%%%%%%%%%%%%%%%%%%%%%%%%%
%%%%%%%%%%%%%%%%%%%%%%%%%%%%
%
%
%
\section{Introduction and main results}

The goal of this paper is to investigate the $\ell^\infty \to \ell^\infty$ norm of products of random matrices with independent and identically distributed (i.i.d.)\ entries. We will identify, in the limit of large matrix sizes, the asymptotic behavior of this operator norm. 

One motivation for studying this question comes from the study of neural networks. One simple neural-network architecture consists in the recursive iteration of a multiplication by a matrix and a component-wise non-linearity such as the positive part $x \mapsto \max(0,x)$. Before training, the matrix entries are typically initialized as independent Gaussian random variables. If we ignore the non-linearity and consider the neural network at initialization, then the mapping from input to output becomes linear and is described by a product of random matrices with i.i.d.\ entries. For more realistic situations, one may get inspiration from the results presented here to devise heuristic guesses for what the $\ell^\infty \to \ell^\infty$ Lipschitz norm of a neural network might be. 

The Lipschitz norm of a neural network is tightly linked with its robustness properties under adversarial attacks. Indeed, the concept of an adversarial attack is often phrased as a small perturbation of the input that causes a major change in the output \cite{szegedy2013intriguing}, with some work suggesting that this is due to the approximately linear behavior of the network \cite{goodfellow2014explaining}. In this context, measuring the sizes of these changes in the $\ell^\infty$ norm is rather natural, see for instance \cite{goodfellow2014explaining, madry2017towards}. A series of works including \cite{bhowmick2021lipbab, gonon2023path, herrera2020local, shi2022efficiently, virmaux2018lipschitz} aim to compute, approximate or bound the Lipschitz constant of neural-network architectures. We also refer to \cite{bartlett2017spectrally, gonon2023approximation, neyshabur2018pac} on the interplay between Lipschitz estimates and generalization bounds.

By duality, the results presented here also yield the asymptotic behavior of the $\ell^1 \to \ell^1$ operator norm of a product of random matrices with i.i.d.\ entries. The $\ell^\infty \to \ell^\infty$ operator norm of a matrix $A$ has a very simple representation as the maximum of the $\ell^1$ norms of the rows of $A$, and this will be the starting point of our analysis. Similar explicit representations exist for the $\ell^\rho \to \ell^q$ operator norm if $\rho = 1$ or $q = \infty$, and it may be possible to extend the asymptotic analysis to these cases. Another notable case is when $\rho = q = 2$, which can be approached using spectral techniques. In particular, for a product of $p$ $n$-by-$n$ matrices with i.i.d.\ centered entries with unit variance, the $\ell^2 \to \ell^2$ operator norm behaves asymptotically like $(p+1)n^{\frac p 2}$ \cite{alexeev2010asymptotic, bai1986limiting}\footnote{The upper bound follows from a minor modification of the proof of \cite[Theorem~2.1]{bai1986limiting}, while the lower bound is a consequence of \cite[Theorem~1.1 and Remark~1.3]{alexeev2010asymptotic}. With non-square matrices, I do not expect there to be a simple formula for the limit, except in terms of solutions to the polynomial equation in \cite[(1.2)]{alexeev2010asymptotic}.} (see also \cite{dubach2021words} for the determination of the limit spectral measure). For other values of $\rho$ and $q$, the asymptotic behavior of the operator norm of a single matrix with i.i.d.\ bounded entries, up to a multiplicative constant,  is discussed in \cite{bennett1977schur, carl1978grenz} and \cite[Remark~4.2]{adamczak2024norms}. There was recent effort to capture the $\ell^\rho \to \ell^q$ operator norm of a matrix with i.i.d.\ or structured independent entries in terms of simpler quantities, up to absolute multiplicative constants (in particular up to constants that do not depend on $\rho$ or $q$) \cite{adamczak2024norms, guedon2017expectation, latala2023chevet, latala2024operator, latala2025operator}. 

We remark that, by interpolation and using the known asymptotic behavior of the $\ell^2 \to \ell^2$ operator norm of a product of random matrices, our results also yield an explicit estimate on the limsup of the $\ell^\rho \to \ell^\rho$ norm of a product of random matrices, for every $\rho \in [1,+\infty]$ (using also duality to cover the interval $[1,2]$). 

An upper bound on the operator norm of a product of matrices is given by the product of the operator norms of the individual matrices. Such an upper bound is often used in practical settings \cite{bartlett2017spectrally, golowich2018size, gonon2023approximation, neyshabur2018pac, neyshabur2015norm}, and is sharp in general. However, for the $\ell^\infty \to \ell^\infty$ operator norm of a product of~$p$ random matrices, this upper bound fails to capture the correct order of magnitude, as it scales like $n^p$ instead of $n^{\frac {p+1}{2}}$ (see Proposition~\ref{p.one.matrix} and Theorem~\ref{t.centered}). This is in contrast to the $\ell^2 \to \ell^2$ operator norm, for which the two quantities only differ by a constant factor $(2^p$ in place of $p+1$ for a product of $p$ $n$-by-$n$ matrices). 

We now proceed to present the main results of this paper more precisely.
For every $x = (x_1,\ldots, x_n) \in \R^n$, we denote by $|x|_{\infty} := \max(|x_1|, \ldots, |x_n|)$  the $\ell^\infty$ norm of $x$. For every matrix $A = (A_{ij})_{1 \le i \le m, 1 \le j \le n} \in \R^{m\times n}$, we denote the $\ell^\infty \to \ell^\infty$ operator norm of $A$ by
\begin{equation}  
\label{e.def.norm}
\|A\|_{\ell^\infty \to \ell^\infty} := \sup_{|x|_{\infty} \le 1} |Ax|_{\infty} = \max_{1 \le i \le m} \sum_{j = 1}^n |A_{ij}|. 
\end{equation}
For completeness, we start by recording the elementary case of a single matrix.
\begin{proposition}[single matrix]
Let $\al > 1$ and let $A = (A_{ij})_{1 \le i \le m, 1 \le j \le n}$ be a family of i.i.d.\ random variables with finite moment of order $2\al$. As $m$ and $n$ tend to infinity with $m = O(n)$, we have
\begin{equation*}  %\label{e.}
n^{-1} \|A\|_{\ell^\infty \to \ell^\infty} \xrightarrow{\text{(prob.)}} \E[|A_{11}|].
\end{equation*}
\label{p.one.matrix}
\end{proposition}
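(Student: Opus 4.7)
The plan is to start from the identity $\|A\|_{\ell^\infty \to \ell^\infty} = \max_{1 \le i \le m} S_i$ with $S_i := \sum_{j=1}^n |A_{ij}|$ from \eqref{e.def.norm}, so the task reduces to showing that $n^{-1} \max_{i} S_i$ converges in probability to $\mu := \E[|A_{11}|]$. Since the centered row sums $S_i - n\mu$ are sums of $n$ i.i.d.\ centered random variables with $2\al$-th moment, and since $|\max_i a_i - \max_i b_i| \le \max_i |a_i - b_i|$ when the $b_i$'s are all equal, it suffices to prove that
\begin{equation*}
\P\Ll( \max_{1 \le i \le m} |S_i - n \mu| \ge \eps n \Rr) \xrightarrow[n\to\infty]{} 0 \qquad \text{for every } \eps > 0.
\end{equation*}

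The key tool is a moment bound for each individual row. Writing $Y_{ij} := |A_{ij}| - \mu$, which are i.i.d., centered, and have finite $2\al$-th moment, Rosenthal's inequality yields
\begin{equation*}
\E\Ll[ \Ll| \sum_{j=1}^n Y_{ij} \Rr|^{2\al} \Rr] \le C_\al \Ll( n^\al \, \E[Y_{11}^2]^\al + n \, \E[|Y_{11}|^{2\al}] \Rr) \le C \, n^\al,
\end{equation*}
where the last inequality uses $\al > 1$ so the $n^\al$ term dominates. Markov's inequality then gives, for each fixed $i$,
\begin{equation*}
\P\Ll( |S_i - n\mu| \ge \eps n \Rr) \le \frac{C n^\al}{(\eps n)^{2\al}} = \frac{C}{\eps^{2\al} n^\al}.
\end{equation*}

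A union bound over $i \in \{1, \ldots, m\}$, using $m = O(n)$, yields
\begin{equation*}
\P\Ll( \max_{1 \le i \le m} |S_i - n\mu| \ge \eps n \Rr) \le \frac{C m}{\eps^{2\al} n^\al} = O\Ll( n^{1-\al} \Rr) \xrightarrow[n\to\infty]{} 0,
\end{equation*}
since $\al > 1$. Combined with the elementary observation above, this yields the claimed convergence in probability.

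The only mildly subtle point is ensuring that the $2\al$-th moment hypothesis (as opposed to a $2$nd moment) buys us enough decay to survive the union bound over $m = O(n)$ rows. The Rosenthal bound, rather than a naive Chebyshev argument (which would only give $\P(|S_i - n\mu| \ge \eps n) = O(n^{-1})$ and hence a useless union bound), is exactly what makes the margin $\al > 1$ sufficient.
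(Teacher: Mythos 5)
Your proposal is correct and follows essentially the same route as the paper: write the norm as the maximum over rows of the $\ell^1$ row sums, apply Rosenthal's inequality to get a $2\al$-th moment bound of order $n^\al$ for each centered row sum, then use Chebyshev/Markov and a union bound over the $m = O(n)$ rows, with $\al > 1$ supplying exactly the needed decay. No gaps.
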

\begin{remark}  %\label{}
In Proposition~\ref{p.one.matrix} and throughout, we implicitly understand that the law of one entry of the matrix $A$ is kept fixed as $m$ and $n$ are sent to infinity.
\end{remark}
\begin{remark}
The moment assumption on the entries of $A$ appearing in Proposition~\ref{p.one.matrix} is essentially necessary, in the sense that for every $\ep \in (0,1]$, one can find a distribution with finite moment of order $2-2\ep$ such that whenever $n = O(m)$, one of the entries of the matrix $A$ is of the order of $n^\frac 2{2-\ep} \gg n$.
%As the proof makes clear, if $m = O(n^\be)$, then the result holds provided that the matrix entries have a finite moment of order greater than $2\beta$. 
A similar argument can be used to show the necessity of the assumption that $m = O(n)$.
\end{remark}

In order to state our results concerning products of matrices, we introduce some notation. For a family of random variables $(U(n_0,\ldots, n_p))_{n_0,\ldots, n_p \ge 1}$ and a random variable $X$, we write
\begin{equation*}  %\label{e.}
U(n_0,\ldots, n_p) \xrightarrow[n_0 \asymp \cdots \asymp n_p \to \infty]{\text{(prob.)}} X
\end{equation*}
to mean that, for every $C < +\infty$ and $\ep,\delta > 0$, there exists $N \in \N$ such that if $n_0, \ldots, n_p$ satisfy
\begin{equation*}  %\label{e.}
\forall q,q' \in \{0,\ldots, p\}, \quad N \le n_q \le C n_{q'} ,
\end{equation*}
then
\begin{equation*}  %\label{e.}
\P \Ll[ \Ll| U(n_0,\ldots, n_p) - X \Rr| \ge \ep \Rr] \le \delta. 
\end{equation*}
The following theorem covers the case in which the entries of all the matrices are centered. We assume that these entries have a unit variance, which entails no loss of generality, by a simple rescaling. 
\begin{theorem}[centered entries]
Let $\al > 1$, $p \ge 2$ an integer, let $A^{(1)} = (A^{(1)}_{ij})_{1 \le i \le n_0,1 \le j  \le n_1}, \ldots, A^{(p)} = (A^{(p)}_{ij})_{1 \le i \le n_{p-1},1 \le j \le n_p}$ be independent families of i.i.d.\ centered random variables with unit variance, with $A^{(1)}_{ij}, \ldots, A^{(p-1)}_{ij}$ having a finite moment of order $4\al$, and with $A^{(p)}_{ij}$ having a finite moment of order $2\al$. Let 
\begin{equation*}  %\label{e.}
P := A^{(1)} \, \cdots \, A^{(p)} \in \R^{n_0 \times n_p}
\end{equation*}
denote the product of these matrices. We have
\begin{equation*}  %\label{e.}
n_p^{-1}(n_1 \, \cdots\, n_{p-1})^{-\frac 1 2} \|P\|_{\ell^\infty \to \ell^\infty} \xrightarrow[n_0 \asymp \cdots \asymp n_p \to \infty]{\text{(prob.)}} \sqrt{\frac 2 \pi} .%\,  \E[|A^{(1)}_{11}|^{2}] \, \cdots \, \E[|A^{(p)}_{11}|^{2}].
\end{equation*}
\label{t.centered}
\end{theorem}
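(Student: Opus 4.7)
The starting point is the identity $\|P\|_{\ell^\infty\to\ell^\infty} = \max_{1\le i\le n_0} L_i$ with $L_i := \sum_{j=1}^{n_p}|P_{ij}|$. The plan is to condition on $R := A^{(1)}\cdots A^{(p-1)}\in\R^{n_0\times n_{p-1}}$ (rather than on the tail $A^{(2)}\cdots A^{(p)}$): writing $P = R\,A^{(p)}$, we get $P_{ij} = \sum_{k=1}^{n_{p-1}} R_{ik}\,A^{(p)}_{kj}$, so for a fixed $i$ the variables $(P_{ij})_{j=1}^{n_p}$ are conditionally \emph{i.i.d.}\ given $R$, because they depend on disjoint columns of $A^{(p)}$. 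Thus, conditionally on $R$, $L_i$ is a sum of $n_p$ i.i.d.\ copies of $|\sum_k R_{ik}X_k|$, where the $X_k$ are i.i.d.\ with the law of $A^{(p)}_{11}$. This reduction is the crucial simplification.

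The next step is to show that the row $\ell^2$-norms of $R$ concentrate, i.e.
\begin{equation*}
\max_{1\le i\le n_0}\Bigl|\|R_{i,\cdot}\|_2^2/(n_1\cdots n_{p-1}) - 1\Bigr|\xrightarrow{\text{(prob.)}} 0.
\end{equation*}
Since $R_{ik}$ is a sum of $n_1\cdots n_{p-2}$ path products of $p-1$ independent centered unit-variance entries, a direct covariance computation gives $\E[R_{ik}^2] = n_1\cdots n_{p-2}$ and hence $\E[\|R_{i,\cdot}\|_2^2] = n_1\cdots n_{p-1}$. A moment expansion using the $4\al$-moment assumption on $A^{(1)},\ldots,A^{(p-1)}$ (in particular, their finite fourth moments) bounds $\mathrm{Var}(\|R_{i,\cdot}\|_2^2) = o((n_1\cdots n_{p-1})^2)$, and a Chebyshev plus a union bound over $i\le n_0\asymp n$ closes this step. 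On the resulting good event, a quantitative $L^1$-CLT applied to $P_{ij}=\sum_k R_{ik}A^{(p)}_{kj}$ yields $\E[|P_{ij}|\mid R] = \sqrt{2/\pi}\,\|R_{i,\cdot}\|_2(1+o(1))$, and therefore $\E[L_i\mid R] = (1+o(1))\sqrt{2/\pi}\,n_p\sqrt{n_1\cdots n_{p-1}}$ uniformly in $i$.

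For the conditional fluctuations, Rosenthal's inequality of exponent $2\al$ is applied twice: first to the i.i.d.\ sum $L_i$, and then to bound $\E[|P_{i1}|^{2\al}\mid R]\le C\|R_{i,\cdot}\|_2^{2\al}$ via a second Rosenthal estimate in $k$ (using the finite $2\al$-moment of $A^{(p)}$ and $\al>1$ to absorb the ``maximum" term). This gives
\begin{equation*}
\E\bigl[\bigl|L_i - \E[L_i\mid R]\bigr|^{2\al}\bigm| R\bigr] \le C\,(n_p\, n_1\cdots n_{p-1})^{\al},
\end{equation*}
so Markov's inequality yields $\P(|L_i-\E[L_i\mid R]|\ge \ep\,n_p\sqrt{n_1\cdots n_{p-1}}\mid R) \le C\ep^{-2\al}\,n_p^{-\al}$. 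Since $\al>1$ and $n_0\asymp n_p$, a union bound over the $n_0$ rows gives a total probability of order $n^{1-\al}\to 0$, which combined with the previous paragraph concludes.

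The main obstacle is the quantitative $L^1$-CLT in the regime $\al\in(1,3/2)$, where $A^{(p)}$ may fail to have a finite third moment and the classical Berry--Esseen bound does not apply directly. The natural remedy is a $(2+\delta)$-moment version of Berry--Esseen with $\delta = 2\al-2>0$, together with a truncation at a scale determined by the $2\al$-moment assumption; the tail from truncation is controlled by Markov on the $2\al$-moment, and the condition $\al>1$ is precisely what makes this effective, matching the moment hypothesis of Proposition~\ref{p.one.matrix}.
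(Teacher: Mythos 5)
Your overall strategy coincides with the paper's: you condition on the product $R$ of the first $p-1$ matrices (this is exactly the paper's conditioning on $P'$, with $\E_p$ denoting expectation over $A^{(p)}$ only, not a different decomposition), exploit the conditional independence of the columns of $A^{(p)}$ to concentrate each row sum around its conditional mean with a $2\al$-moment Rosenthal bound, and identify that conditional mean via a quantitative $L^1$ central limit theorem. The fluctuation step and the identification $\E[|P_{ij}|\mid R]\approx\sqrt{2/\pi}\,\|R_{i,\cdot}\|_2$ are in substance the paper's use of Lemma~\ref{l.concentration} and Proposition~\ref{p.gaussian}.

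There are, however, genuine gaps, all at the same pressure point: every per-row estimate must hold with failure probability $o(n^{-1})$ so that the union bound over the $n_0\asymp n$ rows survives. First, your row-norm concentration step relies only on a variance bound and Chebyshev. Already for $p=2$ one has $\mathrm{Var}\big(\|R_{i,\cdot}\|_2^2\big)\asymp n_1$, so Chebyshev gives a per-row deviation probability of order $1/n$, and the union bound over $n_0\asymp n$ rows produces a constant, not $o(1)$; the step as written does not close. This is precisely where the paper needs moments of order $2\al>2$ of the centered squared entries, i.e.\ the $4\al$-moment hypothesis with $\al>1$ (Proposition~\ref{p.concentration.2}), yielding per-row probability $O(n^{-\al})=o(n^{-1})$. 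Second, the error in the quantitative CLT for $\E[|P_{ij}|\mid R]$ is not controlled by $\|R_{i,\cdot}\|_2$ alone: it involves a Lyapunov-type quantity such as $\sum_k |R_{ik}|^{2\al}$ (or $\sum_k|R_{ik}|^{2+\delta}$ after truncation), and you must show it is negligible relative to $\|R_{i,\cdot}\|_2^{2\al}$ simultaneously for all $n_0$ rows, again with per-row failure probability $o(n^{-1})$. Your good event only controls the row $\ell^2$ norms, and your final paragraph addresses the truncation of $A^{(p)}$, which is a different issue; the needed uniform control of $\sum_k|R_{ik}|^{2\al}$ is not routine and is exactly the content of the paper's Lemma~\ref{l.fluct.alpha}, proved by an induction with a positive-part truncation. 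Both gaps are fixable with the hypotheses you already invoke, but as written the argument does not go through.
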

\begin{remark}
Assuming that $A_{11}^{(p)}$ has a finite second moment is necessary for the statement of the theorem to make sense. In the special case $p = 2$, Proposition~\ref{p.optim.moment} below shows that a fourth-moment assumption on $A^{(1)}_{11}$ is essentially necessary as well. 
\end{remark}
For all integers $m,n \ge 1$, we write $\1_{m,n}$ to denote the $m$-by-$n$ matrix with all entries equal to $1$. We say that a matrix of the form $\1_{m,n}$ for some integers $m,n \ge 1$ is a $\1$-type matrix. Any matrix with i.i.d.\ and possibly non-centered  entries can be decomposed into the sum of a matrix with i.i.d.\ centered entries and a multiple of a $\1$-type matrix. A product of matrices with i.i.d.\ and possibly non-centered entries can therefore be rewritten as a linear combination of products of matrices with i.i.d.\ centered entries and $\1$-type matrices. We therefore focus on analysing the behavior of such products. So consider a product of $p$ terms, each term being either a matrix with i.i.d.\ centered entries or a $\1$-type matrix. If a single $\1$-type matrix appears in the product, and it is the first term of the product, then the operator norm of the product turns out to be of the same order of magnitude as in the case involving $p$ centered matrices, that is, of order $n^{\frac{p+1}{2}}$ if all matrices are of size $n$ by $n$. 
\begin{theorem}[Products that start with $\1$]
\label{t.1P}
Let $\al > 1$, $p \ge 1$ an integer, let $A^{(1)} = (A^{(1)}_{ij})_{1 \le i \le n_0,1 \le j  \le n_1}, \ldots, A^{(p)} = (A^{(p)}_{ij})_{1 \le i \le n_{p-1},1 \le j \le n_p}$ be independent families of i.i.d.\ centered random variables with unit variance and a finite moment of order $2\al$, and  let 
\begin{equation*}  %\label{e.}
P := A^{(1)} \, \cdots \, A^{(p)} \in \R^{n_0 \times n_p}
\end{equation*}
denote the product of these matrices. We have 
\begin{equation}  
\label{e.1P}
n_p^{-1} (n_0 \, \cdots \, n_{p-1})^{-\frac 1 2}\|\1_{m,n_0} P\|_{\ell^\infty \to \ell^\infty} \xrightarrow[n_0 \asymp \cdots \asymp n_p \to \infty]{\text{(prob.)}} \sqrt{\frac 2 \pi}. %\, \E[|A^{(1)}_{11}|^{2}] \, \cdots \, \E[|A^{(p)}_{11}|^{2}].
\end{equation}
\end{theorem}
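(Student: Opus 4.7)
The starting observation is that every row of $\1_{m, n_0} P$ equals the single row vector $v := \1_{1, n_0} A^{(1)} \cdots A^{(p)} \in \R^{1 \times n_p}$, so
\begin{equation*}
\|\1_{m, n_0} P\|_{\ell^\infty \to \ell^\infty} = \sum_{k=1}^{n_p} |v_k|,
\end{equation*}
which in particular does not depend on $m$. Set $\xi^\top := \1_{1, n_0} A^{(1)} \cdots A^{(p-1)} \in \R^{1 \times n_{p-1}}$ (with the convention $\xi := \1_{n_0, 1}$ when $p = 1$), so that $v_k = \sum_{i=1}^{n_{p-1}} \xi_i A^{(p)}_{ik}$ and, conditionally on $\xi$, the variables $(v_k)_k$ are i.i.d.\ centered with variance $|\xi|^2$. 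The plan is to establish the two complementary claims
\begin{equation*}
\text{(a)}\ \ \frac{|\xi|^2}{n_0 n_1 \cdots n_{p-1}} \xrightarrow{\text{(prob.)}} 1, \qquad \text{(b)}\ \ \frac{1}{n_p |\xi|} \sum_{k=1}^{n_p} |v_k| \xrightarrow{\text{(prob.)}} \sqrt{\tfrac{2}{\pi}},
\end{equation*}
and to combine them to obtain~\eqref{e.1P}.

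For (a), introduce $\eta_0 := \1_{n_0, 1}$ and $\eta_q := (A^{(q)})^\top \eta_{q-1}$ for $1 \le q \le p-1$, so that $\xi = \eta_{p-1}$. The argument proceeds by induction on $q$. Writing $(\eta_q)_j / |\eta_{q-1}| = \sum_i a_i A^{(q)}_{ij}$ with $a := \eta_{q-1}/|\eta_{q-1}|$ a unit vector, the Marcinkiewicz--Zygmund inequality combined with Jensen's inequality for the convex map $x \mapsto x^\alpha$ produces the weighted moment bound
\begin{equation*}
\E \Ll[ \Ll| {\textstyle \sum_i a_i A_i} \Rr|^{2\alpha} \Rr] \le C_\alpha \, \E \bigl[ |A_{11}^{(q)}|^{2\alpha} \bigr]
\end{equation*}
for every unit vector $a$, which is uniform in $\eta_{q-1}$. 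The squared normalized entries $Z_j^2 := ((\eta_q)_j/|\eta_{q-1}|)^2$ are conditionally i.i.d.\ with mean $1$ and a uniformly bounded $\alpha$-moment, and a triangular-array weak law of large numbers (valid since $\alpha > 1$) then gives $|\eta_q|^2 / (n_q |\eta_{q-1}|^2) \to 1$ in probability at a quantitative rate, closing the induction. Iterating the same moment bound yields $\E[\xi_i^{2\alpha}] \le C (n_0 \cdots n_{p-2})^\alpha$, from which Markov's inequality and a union bound over $i$ produce the key delocalization estimate $\max_i \xi_i^2 / |\xi|^2 \to 0$ in probability, the threshold $\alpha > 1$ being precisely what the ratio $\max_i \xi_i^2/|\xi|^2 \lesssim n_{p-1}^{1/\alpha - 1 + o(1)}$ demands.

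For (b), working conditionally on $\xi$, set $Y_k := v_k/|\xi| = \sum_i a_i A^{(p)}_{ik}$ with $a_i := \xi_i/|\xi|$. The delocalization $\max_i |a_i| \to 0$ from (a) verifies the Lindeberg condition, giving convergence of $Y_k$ to $\mathcal{N}(0,1)$ in distribution conditionally on $\xi$; the uniform $2\alpha$-moment bound on $Y_k$ supplies the uniform integrability of $|Y_k|$ needed to upgrade this to $\E[|Y_k| \mid \xi] \to \sqrt{2/\pi}$. Since the $(|Y_k|)_k$ are i.i.d.\ conditionally on $\xi$, also with uniformly bounded $2\alpha$-moment, a second application of the triangular-array weak law gives $\frac{1}{n_p}\sum_k |Y_k| \to \sqrt{2/\pi}$ in conditional probability, and hence unconditionally, proving (b).

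The main technical obstacle is that only moments of order $2\alpha$ with $\alpha > 1$ are assumed, so the classical $L^2$-based concentration of $|\eta_q|^2$---which would require a fourth moment on the entries---is unavailable. The unifying device that makes $\alpha > 1$ sufficient throughout is the weighted moment bound displayed above: it simultaneously drives the conditional weak laws for $|\eta_q|^2$ and for $\sum_k |Y_k|$, the delocalization of $\xi$, and the uniform integrability in the conditional CLT.
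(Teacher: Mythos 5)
Your argument is correct, and it follows the same skeleton as the paper's proof (condition on the first $p-1$ matrices, identify the row vector $\xi_j=\sum_i P'_{ij}$, prove $|\xi|^2\approx n_0\cdots n_{p-1}$, extract $\sqrt{2/\pi}$ from a CLT for $\sum_j \xi_j A^{(p)}_{jk}/|\xi|$, then average over $k$), but the technical tools differ at each step. Your claim (a) and your moment bound $\E[\xi_j^{2\al}]\le C(n_0\cdots n_{p-2})^\al$ are exactly the paper's \eqref{e.tsum.2} and \eqref{e.tsum.1}, proved there by Lemmas~\ref{l.concentr.1A1.2} and~\ref{l.concentr.1A1.1} via a von Bahr--Esseen/Rosenthal induction on the unnormalized column sums; your normalized recursion $\eta_q=(A^{(q)})^\top\eta_{q-1}$ with the weighted Marcinkiewicz--Zygmund bound and a conditional triangular-array weak law carries the same content with different bookkeeping. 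For the averaging over $k$, you exploit that conditionally on $\xi$ the $v_k$ are genuinely i.i.d.\ (disjoint columns of $A^{(p)}$) and use Chebyshev, which is simpler than the paper's Efron--Stein concentration (Lemma~\ref{l.concentration}); the latter is needed in Theorem~\ref{t.centered}, where the maximum over rows forces uniform control, but is overkill here since all rows of $\1_{m,n_0}P$ coincide. The most substantive divergence is the $\sqrt{2/\pi}$ step: the paper uses the quantitative Stein estimate (Proposition~\ref{p.gaussian}), whose error is controlled by $\sum_j|\xi_j|^{2\al}$ through \eqref{e.tsum.1}, whereas you use a qualitative Lindeberg CLT plus uniform integrability, powered by the delocalization $\max_j \xi_j^2/|\xi|^2\to0$, your union bound correctly identifying $\al>1$ as the threshold. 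This works, and buys a more elementary, self-contained argument at the price of losing explicit rates (which the paper's route provides and reuses elsewhere); the one point you should tighten is the passage from the conditional CLT with \emph{random} weights $a=\xi/|\xi|$ to convergence in probability of $\E_p[|Y_1|]$: you need either a deterministic uniform modulus, e.g.\ $\bigl|\E\bigl|\sum_j a_j A_j\bigr|-\sqrt{2/\pi}\bigr|\le C(\max_j|a_j|)^{\min(2\al,3)-2}$ for unit vectors $a$ (which is again Proposition~\ref{p.gaussian}, or a compactness argument), or a subsequence/Skorokhod reduction as in Remark~\ref{r.random.coefs}; as written this step is asserted rather than proved, though it is standard.
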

\begin{remark}  
\label{r.triv}
For any matrix $A \in \R^{n_0,m_p}$, the quantity $\|\1_{m,n_0} A\|_{\ell^\infty \to \ell^\infty}$ does not depend on $m$. In particular, the quantity on the left side of \eqref{e.1P} does not depend on $m$, and we therefore do not need to specify how (or whether) $m$ tends to infinity in this result. 
\end{remark}
The next case we consider is that of a product which starts and ends with a $\1$-type matrix, and otherwise only involves matrices with i.i.d.\ centered entries. If the product has a total of $p \ge 3$ terms (and thus involves $p-2$ random matrices), and if all the matrices are $n$-by-$n$, then we find again that the operator norm of the product is of the order of $n^{\frac {p+1}{2}}$. The starting point of the analysis is the elementary observation that, for every matrix $P \in \R^{n_1\times n_3}$, 
\begin{equation*}  %\label{e.}
\1_{n_0,n_1} P \1_{n_2, n_3} = \Ll( \sum_{i = 1}^{n_1} \sum_{j = 1}^{n_2} P_{ij} \Rr) \1_{n_0,n_3},
\end{equation*}
so that 
\begin{equation*}  %\label{e.}
\|\1_{n_0,n_1} P \1_{n_2, n_3}\|_{\ell^\infty \to \ell^\infty} = n_3 \Ll| \sum_{i = 1}^{n_1} \sum_{j = 1}^{n_2} P_{ij}  \Rr| .
\end{equation*}
The next theorem describes the behavior of this scalar variable. This is the only base case that displays a random limit behavior. 
\begin{theorem}[the case of $\1 P\1$]
\label{t.sum}
Let $\al > 1$, $p \ge 1$ an integer, let $A^{(1)} = (A^{(1)}_{ij})_{1 \le i \le n_0,1 \le j  \le n_1}, \ldots, A^{(p)} = (A^{(p)}_{ij})_{1 \le i \le n_{p-1},1 \le j \le n_p}$ be independent families of i.i.d.\ centered random variables with unit variance and a finite moment of order $2\al$, and  let 
\begin{equation*}  %\label{e.}
P := A^{(1)} \, \cdots \, A^{(p)} \in \R^{n_0 \times n_p}
\end{equation*}
denote the product of these matrices. The random variable
\begin{equation*}  %\label{e.}
(n_0 \, \cdots \, n_p
%\, \E[|A^{(1)}_{11}|^{2}] \, \cdots \, \E[|A^{(p)}_{11}|^{2}]
)^{-\frac 1 2} \sum_{i = 1}^{n_0} \sum_{j = 1}^{n_p} P_{ij}
\end{equation*}
converges in law to a centered Gaussian random variable of unit variance.
\end{theorem}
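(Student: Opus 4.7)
The plan is to reduce the convergence in law to a conditional Lyapunov CLT. Writing $T := \sum_{i,j} P_{ij} = \1_{n_0}^\top A^{(1)} \cdots A^{(p)} \1_{n_p}$, the case $p = 1$ is the classical CLT for the sum of $n_0 n_1$ i.i.d.\ centered unit-variance entries, so I focus on $p \ge 2$. Set $R := (A^{(1)})^\top \1_{n_0} \in \R^{n_1}$ (whose coordinates $R_k = \sum_i A^{(1)}_{i k}$ are i.i.d.\ centered of variance $n_0$) and $Y := A^{(2)} \cdots A^{(p)} \1_{n_p} \in \R^{n_1}$, which is independent of $R$, so that $T = \sum_{k = 1}^{n_1} R_k Y_k$. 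Conditional on $Y$, this is a sum of $n_1$ independent centered random variables with total variance $n_0 |Y|_2^2$, to which I apply the Lyapunov CLT with exponent $2\al > 2$. Rosenthal's inequality gives $\E[|R_1|^{2\al}] \le C n_0^\al$, and combining with $\sum_k |Y_k|^{2\al} \le (\max_k |Y_k|)^{2\al - 2} |Y|_2^2$ bounds the conditional Lyapunov ratio by $C(\max_k |Y_k|/|Y|_2)^{2\al - 2}$. The proof therefore reduces to two statements: (a) $|Y|_2^2/(n_1 \cdots n_p) \to 1$ in probability, and (b) $\max_k |Y_k|/|Y|_2 \to 0$ in probability.

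For (a), set $G^{(k)} := A^{(p - k + 1)} \cdots A^{(p)} \1_{n_p} \in \R^{n_{p-k}}$ (so $Y = G^{(p-1)}$) and prove $|G^{(k)}|_2^2/(n_{p-k} \cdots n_p) \to 1$ in probability by induction on $k$, starting from the trivial case $k = 0$. Conditional on $G^{(k)}$, the coordinates $G^{(k+1)}_i = \sum_j A^{(p-k)}_{ij} G^{(k)}_j$ are i.i.d.\ centered in $i$ with variance $|G^{(k)}|_2^2$, so $|G^{(k+1)}|_2^2 = |G^{(k)}|_2^2 \sum_i U_i$ with $U_i := (G^{(k+1)}_i)^2/|G^{(k)}|_2^2$ conditionally i.i.d.\ of mean $1$. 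Rosenthal combined with the deterministic bound $\max_j |G^{(k)}_j| \le |G^{(k)}|_2$ yields a uniform conditional moment estimate $\E[U_i^\al \mid G^{(k)}] \le C$, which gives uniform integrability of the triangular array $(U_i)_i$; a truncation argument then produces the triangular-array WLLN $n_{p-k-1}^{-1} \sum_i U_i \to 1$ in probability, closing the induction.

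For (b), iterating Rosenthal together with the convexity inequality $(\sum_j x_j)^\al \le n^{\al - 1} \sum_j x_j^\al$ applied to $x_j = (G^{(k)}_j)^2$ yields the unconditional moment bound $\E[|G^{(k)}_i|^{2\al}] \le C_k (n_{p-k+1} \cdots n_p)^\al$. Specialized to $k = p - 1$, Markov's inequality and a union bound over $k \in \{1, \ldots, n_1\}$ give
\[
\P\bigl(\max_k |Y_k| > \eps \sqrt{n_1 \cdots n_p}\bigr) \le C \eps^{-2\al} n_1^{1 - \al} \to 0
\]
because $\al > 1$, and combined with (a) this proves (b). The conditional Lyapunov CLT then gives $T/\sqrt{n_0 |Y|_2^2} \to N(0,1)$ for $Y$ in a set of probability tending to $1$, and (a) converts this into $T/\sqrt{n_0 n_1 \cdots n_p} \to N(0,1)$ in law via a characteristic-function argument.

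The main obstacle is the triangular-array weak law used inside the induction for (a): the summands $U_i$ have laws that depend simultaneously on the growing dimensions $n_{p-k}, \ldots, n_p$ and on the random realization of $G^{(k)}$, so the classical i.i.d.\ WLLN does not apply directly, and one must rely on the uniform $L^\al$ bound from Rosenthal together with a truncation argument to handle the subquadratic case $1 < \al < 2$.
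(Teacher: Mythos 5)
Your proof is correct in substance and is structurally a mirror image of the paper's argument: where you condition on $A^{(2)},\ldots,A^{(p)}$ and apply a CLT in the randomness of $A^{(1)}$ to $T=\sum_k R_k Y_k$, the paper conditions on $A^{(1)},\ldots,A^{(p-1)}$ and applies its quantitative Gaussian approximation (Proposition~\ref{p.gaussian}, a Stein-method bound in Wasserstein distance) to $\sum_{j,k}\big(\sum_i P'_{ij}\big)A^{(p)}_{jk}$. Your two reductions (a) and (b) are exactly the transposed analogues of the paper's inputs \eqref{e.tsum.2} and \eqref{e.tsum.1}: your inductive moment bound $\E[|G^{(k)}_i|^{2\al}]\le C(n_{p-k+1}\cdots n_p)^\al$ is Lemma~\ref{l.concentr.1A1.1} applied to the transpose (and is proved the same way, by conditional Rosenthal plus the convexity inequality), while your law of large numbers for $|Y|_2^2$ is the analogue of Lemma~\ref{l.concentr.1A1.2}; there the paper gets a clean $L^\al$ rate from the von Bahr--Esseen inequality, whereas you use uniform conditional $L^\al$ bounds plus truncation, which also works but yields only a qualitative statement. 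Two differences in execution are worth noting. First, you control the CLT error through $\max_k|Y_k|/|Y|_2$ via a union bound (using $\al>1$), while the paper bounds $\sum_j|c_j|^{2\al}$ directly in expectation; both are fine. Second, since the coefficients $Y$ form a random triangular array whose realization changes with the dimensions, invoking ``the Lyapunov CLT conditionally'' needs one more word: you should use a quantitative version of the CLT whose error is controlled by the Lyapunov ratio uniformly (e.g.\ an Esseen/Katz-type Berry--Esseen bound with moments of order $2+\delta$, or precisely the paper's Proposition~\ref{p.gaussian}), restricted to the event where the ratio is small; with a purely asymptotic Lyapunov theorem the passage from ``ratio $\to0$ in probability'' to the unconditional convergence in law is not literally licensed. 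This is a standard fix and does not affect the validity of your overall scheme.
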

The next theorem addresses the case in which a product of matrices with i.i.d.\ centered entries is followed by a $\1$-type matrix. If a total of $p$ matrices is involved (so that there are $p-1$ random matrices in total), and if all matrices are $n$-by-$n$, then we find that the operator norm of the product is slightly larger than in the previous cases, by a factor of $\sqrt{\log n}$. 
\begin{theorem}[Products that end with $\1$]
\label{t.P1}
Let $p \ge 1$ be an integer, let $A^{(1)} = (A^{(1)}_{ij})_{1 \le i \le n_0,1 \le j  \le n_1}, \ldots, A^{(p)} = (A^{(p)}_{ij})_{1 \le i \le n_{p-1},1 \le j \le n_p}$ be independent families of i.i.d.\ centered random variables with unit variance and a finite fourth moment, and let 
\begin{equation*}  %\label{e.}
P := A^{(1)} \, \cdots \, A^{(p)} \in \R^{n_0 \times n_p}
\end{equation*}
denote the product of these matrices. We have 
\begin{equation}  
\label{e.P1}
m^{-1} (2 n_1 \, \cdots \, n_{p}  \log n_0)^{-\frac 1 2} \|P\1_{n_p,m}\|_{\ell^\infty \to \ell^\infty} \xrightarrow[n_0 \asymp \cdots \asymp n_{p} \to \infty]{\text{(prob.)}} 1.
\end{equation}
\end{theorem}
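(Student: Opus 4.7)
The decisive structural observation is that every column of $P\1_{n_p,m}$ equals the vector of row sums $R := P\1_{n_p} \in \R^{n_0}$, so that
\[
\|P\1_{n_p,m}\|_{\ell^\infty \to \ell^\infty} = m \max_{1 \le i \le n_0} |R_i|,
\]
and it suffices to prove $\max_i |R_i| / \sqrt{2 n_1 \cdots n_p \log n_0} \to 1$ in probability. The plan is to telescope via $v^{(p)} := \1_{n_p}$ and $v^{(\ell)} := A^{(\ell+1)} v^{(\ell+1)}$ for $\ell = p-1,\ldots,0$; then $R = A^{(1)} v^{(1)}$, and conditional on $v^{(1)}$ the variables $R_i = \sum_{k=1}^{n_1} A^{(1)}_{ik} v^{(1)}_k$ are i.i.d.\ centered with variance $\|v^{(1)}\|_2^2$.

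The first main step is a reverse induction on $\ell$ showing that with probability tending to $1$,
\[
\|v^{(\ell)}\|_2^2 = (1+o(1))\, n_\ell n_{\ell+1} \cdots n_p \quad \text{and} \quad \|v^{(\ell)}\|_\infty^2 \le C\, n_{\ell+1} \cdots n_p \log n_\ell .
\]
Conditional on $v^{(\ell+1)}$, the entries of $v^{(\ell)}$ are i.i.d.\ centered with variance $\|v^{(\ell+1)}\|_2^2$, so the $L^2$ statement follows from the identity $\E[\|v^{(\ell)}\|_2^2 \mid v^{(\ell+1)}] = n_\ell \|v^{(\ell+1)}\|_2^2$ together with a fourth-moment/Chebyshev computation, while the $\ell^\infty$ delocalization comes from a union bound combined with a Fuk-Nagaev tail bound applied to each entry. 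A side benefit of this iterated convolution structure is that the entries of $v^{(1)}$ acquire many more finite moments than the four assumed on the $A^{(\ell)}_{ij}$.

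Given Step~1, the upper bound in \eqref{e.P1} follows by applying Fuk-Nagaev with $q=4$ to $R_i$ conditional on $v^{(1)}$ at the level $t = (1+\ep)\sqrt{2 n_1 \cdots n_p \log n_0}$: the power-law term is negligible thanks to $\|v^{(1)}\|_4^4 \le \|v^{(1)}\|_\infty^2 \|v^{(1)}\|_2^2$ and the estimates from Step~1, while the Gaussian term contributes $n_0^{-(1+\ep)^2}$, so a union bound over $i$ yields convergence to $0$. For the lower bound, I would apply a moderate deviation principle to the weighted sum $R_i = \sum_k A^{(1)}_{ik} v^{(1)}_k$ conditionally on $v^{(1)}$: with a finite fourth moment, the moderate deviation principle holds at scales $o(n_1^{1/4})$, which comfortably covers the target $t = \sqrt{2 \log n_0}$, and the delocalization of $v^{(1)}$ ensures the weights are well-spread. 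This yields $\P(R_i \ge t \mid v^{(1)}) \ge c\, n_0^{-(1-\ep)^2}/\sqrt{\log n_0}$ for $t = (1-\ep)\sqrt{2 n_1 \cdots n_p \log n_0}$, and conditional independence of the $R_i$ then gives $\P(\max_i R_i < t \mid v^{(1)}) \le \exp(-c\, n_0^{2\ep-\ep^2}/\sqrt{\log n_0}) \to 0$.

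The main obstacle lies in the conditional moderate deviation estimate: pushing it to the borderline scale $\sqrt{\log n_0}$ for a sum of non-identically distributed summands with only four finite moments on the $A^{(1)}_{ik}$ is delicate. Either a weighted Fuk-Nagaev lower bound or a Cram\'er-style truncation-and-tilt applied conditionally on $v^{(1)}$ should suffice; in either case, the sharpness of the $\ell^\infty$ delocalization bound from Step~1 is precisely what prevents a few heavy weights from spoiling the Gaussian tail, and this interplay is the crux of the proof.
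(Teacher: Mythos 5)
Your plan follows essentially the same route as the paper: rewrite the norm as $m\max_{1\le i\le n_0}|R_i|$ with $R=P\1_{n_p}$, condition on the weight vector $v^{(1)}$ (the row sums of $A^{(2)}\cdots A^{(p)}$), establish that $\|v^{(1)}\|_2^2=(1+o(1))\,n_1\cdots n_p$ together with $\ell^\infty$ and $\ell^4$ delocalization of the weights (the paper gets these from Lemmas~\ref{l.concentr.1A1.1} and~\ref{l.concentr.1A1.2} applied to the transpose, plus the induction hypothesis; your direct union-bound argument for the $\ell^\infty$ control is a harmless variant), and then prove matching sharp deviation estimates at level $(1\pm\ep)\sqrt{2\,n_1\cdots n_p\log n_0}$ for the conditionally i.i.d.\ weighted row sums. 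This last ingredient is exactly the paper's Proposition~\ref{p.A1} together with Remark~\ref{r.random.coefs}. Your upper bound can be made to work, but only with a Fuk--Nagaev/Bernstein-type inequality whose Gaussian exponent constant is arbitrarily close to optimal; the classical Fuk--Nagaev constants are far too lossy to see the sharp threshold, and the refined versions are themselves proved by the truncation-plus-Chernoff argument that the paper carries out in Step~3 of Proposition~\ref{p.A1}.

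The genuine soft spot is the lower bound, where your quoted tool is misstated: with only a finite fourth moment, Cram\'er-type relative-error tail approximation does not hold "at scales $o(n_1^{1/4})$"; Linnik-zone results give Gaussian-tail equivalence only up to $c\sqrt{\log n}$ standard deviations, and they are formulated for i.i.d.\ summands rather than for weighted sums with non-identical terms. Your target $x=(1-\ep)\sqrt{2\log n_0}$ does fall inside that range, so the plan is not derailed, but there is no off-the-shelf statement to cite in the conditional, weighted setting: one has to truncate (the paper truncates at $n^{1/4}$), expand the log-moment generating function of the truncated variables, and run an exponential change of measure to obtain $\P\bigl(R_i\ge t\mid v^{(1)}\bigr)\ge n_0^{-(1-\ep)^2+o(1)}$, after which the conditional independence over $i$ finishes the argument as you say. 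You correctly identify this as the crux and name the right fix (truncation-and-tilt), but as written the proposal leaves the paper's main technical ingredient asserted rather than proven, with the delocalization of $v^{(1)}$ entering precisely through the hypotheses \eqref{e.A1.ass.a}--\eqref{e.A1.ass.b} of Proposition~\ref{p.A1}.
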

\begin{remark}  %\label{}
For any matrix $A \in \R^{n_0,n_p}$, the quantity $m^{-1} \|A \1_{n_p,m}\|_{\ell^\infty \to \ell^\infty}$ does not depend on $m$. In particular, the quantity on the left side of \eqref{e.P1} does not depend on $m$, and we therefore do not need to specify how (or whether) $m$ diverges to infinity in this result. 
\end{remark}
We now explain how to reduce all other cases to those already covered by the results above. First, notice that if two $\1$-type matrices appear consecutively in a product, then they can be replaced by a single $\1$-type matrix using the elementary identity
\begin{equation}  
\label{e.elentary.11}
\1_{n_0,n_1} \, \1_{n_1, n_2} = n_1 \1_{n_0,n_2}.
\end{equation}
Finally, when a $\1$-type matrix appears neither at the beginning nor at the end of the product, the operator norm of the product can be decomposed according to the following observation.
\begin{lemma}[Decomposition of the operator norm]
\label{l.decomp}
For all matrices $A \in \R^{n_0,n_1}$ and $B \in \R^{n_2,n_3}$, we have
\begin{equation}  
\label{e.decomp1}
\|A \1_{n_1,n_2} B\|_{\ell^\infty \to \ell^\infty} = n_2^{-1} \|A\1_{n_1,n_2}\|_{\ell^\infty \to \ell^\infty} \, \|\1_{n_2,n_2} B\|_{\ell^\infty \to \ell^\infty}.
\end{equation}
\end{lemma}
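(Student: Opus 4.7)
The plan is to observe that multiplication by a $\1$-type matrix has a very rigid structure: it turns the output into a rank-one combination, which collapses the operator norm into a product of a row maximum and a column-sum. The identity then falls out by direct computation using the formula~\eqref{e.def.norm} for the $\ell^\infty \to \ell^\infty$ norm.

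Concretely, I would first compute each of the three matrices appearing in \eqref{e.decomp1}. Setting $r_i := \sum_{k=1}^{n_1} A_{ik}$ and $c_j := \sum_{k=1}^{n_2} B_{kj}$, one checks entrywise that
\begin{equation*}
(A \1_{n_1,n_2})_{ij} = r_i, \qquad (\1_{n_2,n_2} B)_{ij} = c_j, \qquad (A \1_{n_1,n_2} B)_{ij} = r_i \, c_j,
\end{equation*}
the last identity following from the preceding two and the associativity of matrix multiplication. Thus $A\1_{n_1,n_2}$ has all rows equal to $(r_i,\ldots, r_i)$ of length $n_2$; the matrix $\1_{n_2,n_2}B$ has all rows equal to $(c_1, \ldots, c_{n_3})$; and $A\1_{n_1,n_2}B$ has rows $(r_i c_1, \ldots, r_i c_{n_3})$.

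Applying \eqref{e.def.norm} to each of these matrices yields
\begin{equation*}
\|A\1_{n_1,n_2}\|_{\ell^\infty \to \ell^\infty} = n_2 \max_{1 \le i \le n_0} |r_i|, \quad \|\1_{n_2,n_2} B\|_{\ell^\infty \to \ell^\infty} = \sum_{j=1}^{n_3} |c_j|,
\end{equation*}
and
\begin{equation*}
\|A\1_{n_1,n_2} B\|_{\ell^\infty \to \ell^\infty} = \Ll( \max_{1 \le i \le n_0} |r_i| \Rr)  \sum_{j=1}^{n_3} |c_j|.
\end{equation*}
Multiplying the first two displays and dividing by $n_2$ gives exactly the third, which is \eqref{e.decomp1}. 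There is no real obstacle here: the only thing to watch out for is keeping the indexing straight so that the factor $n_2$ (the dimension being collapsed at the junction between the two blocks) ends up on the correct side of the identity.
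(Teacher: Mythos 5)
Your proof is correct and complete: the entrywise identities $(A\1_{n_1,n_2})_{ij}=r_i$, $(\1_{n_2,n_2}B)_{ij}=c_j$, $(A\1_{n_1,n_2}B)_{ij}=r_ic_j$ are all right, and plugging them into the max-row-sum formula \eqref{e.def.norm} gives exactly \eqref{e.decomp1}. The paper reaches the same conclusion by a slightly different route: it keeps the sup-over-the-unit-ball definition of the norm and uses the fact that $\1_{n_1,n_2}x$ is a constant vector with common entry $\sum_j x_j$, so that $|A\1_{n_1,n_2}Bx|_\infty$ factors as $|A\1|_\infty$ times the absolute column-sum functional applied to $Bx$, which is then recognized as the quantity computed by $\|\1_{n_2,n_2}B\|_{\ell^\infty\to\ell^\infty}$. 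Both arguments rest on the same rank-one collapse caused by the $\1$-type matrix, but your version, working directly with row sums $r_i$ and column sums $c_j$ of the three matrices, is arguably the cleaner one: it never needs to identify the sup of $\bigl|\sum_k (Bx)_k\bigr|$ over $|x|_\infty\le 1$ with anything else, whereas the paper's intermediate displays write $|x|_1$ (resp.\ $|Bx|_1$) where strictly speaking only $\bigl|\sum_j x_j\bigr|$ (resp.\ $\bigl|\sum_k(Bx)_k\bigr|$) is meant --- a harmless sloppiness since it is used consistently on both sides of the identity, but one that your computation avoids altogether. So no gap; your argument stands as is.
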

\begin{proof}
Writing $\1_{n_2} := (1,\ldots, 1) \in \R^{n_2}$, we have for every $x \in \R^{n_2}$ that 
\begin{equation*}  %\label{e.}
\1_{n_1,n_2} x = \1_{n_2} |x|_{1}, \quad \text{ so } \quad |A \1_{n_1,n_2} x|_{\infty} = |A \1_{n_2}|_\infty |x|_1.
\end{equation*}
In particular,
\begin{equation*}  %\label{e.}
\|A\1_{n_1,n_2}\|_{\ell^\infty \to \ell^\infty} = n_2 |A \1_{n_2}|_\infty
\end{equation*}
and
\begin{multline*}  %\label{e.}
\|A \1_{n_1,n_2} B\|_{\ell^\infty \to \ell^\infty} = \sup_{|x|_\infty \le 1} |A\1_{n_1, n_2} Bx|_{\infty} 
\\
= |A \1_{n_2}| \sup_{|x|_\infty \le 1} |Bx|_1 = n_2^{-1}\|A\1_{n_1,n_2}\|_{\ell^\infty \to \ell^\infty} \sup_{|x|_\infty \le 1} |Bx|_1.
\end{multline*}
We obtain the result after observing that $|\1_{n_2,n_2} Bx|_{\infty} = |Bx|_{1}$. 
\end{proof}
%\jccomment{add a comment like this: For a product of $p$ matrices with no two consecutive $\1$-type matrices, we have the following alternative: either the product starts with a $\1$-type matrix, in which case iterative application of Lemma~\ref{l.decomp} leads to base cases of the form $1P1$, or possibly just $1P$ for the last term, and in this case the operator norm of the whole is of the order of $n^{\frac{p+1}{2}}$ (and we have random fluctuations unless there was no other $\1$-type matrix besides the first one); or the product starts with a random matrix with centered entries, in which case it will spit out a $P1$ term first, which has an extra diverging term of $\sqrt{\log n}$, and then the rest is as before, so in total we get something of order $\sqrt{\log n} n^{\frac{p+1}{2}}$ (and random fluctuations unless there was only one $\1$-type matrix in the product)}. 
Using Lemma~\ref{l.decomp} iteratively, we can therefore reduce our analysis to the case of products involving only matrices with i.i.d.\ centered entries, except possibly in the first and last positions. Those cases are covered by Theorems~\ref{t.centered}, \ref{t.1P}, \ref{t.sum}, and \ref{t.P1}.

The rest of this paper is organized as follows. The short Section~\ref{s.one.mat} provides the proof of Proposition~\ref{p.one.matrix}. Section~\ref{s.centered} focuses on the proof of Theorem~\ref{t.centered}. One important ingredient is Proposition~\ref{p.gaussian}, which gives a quantitative central limit theorem for sums of independent (but not necessarily identically distributed) random variables, and is phrased in terms of the Wasserstein $L^1$ distance. Other ingredients include upper bounds on the moments of the entries of a product of random matrices with i.i.d.\ centered entries, and concentration estimates. All these results will also be used in subsequent sections. Section~\ref{s.sum} concerns the proof of Theorem~\ref{t.sum}. Further moment and concentration estimates, concerning column sums of the coefficients of the product matrix, are derived and also used in the sequel. With all these tools in place, we can readily show Theorem~\ref{t.1P} in Section~\ref{s.1P}. The final Section~\ref{s.P1} concerns the proof of Theorem~\ref{t.P1}. This result has a different nature from the previous cases, as the maximum in the expression for the operator norm in \eqref{e.def.norm} is responsible for the additional logarithmic divergence. The proof of this involves a suitable truncation, as well as an argument in the spirit of moderate deviations for the row sums of the product matrix. 

%
%
%
%%%%%%%%%%%%%%%%%%%%%%%%%%%%
%%%%%%%%%%%%%%%%%%%%%%%%%%%%
%
%
%
\section{The case of one matrix}
\label{s.one.mat}

In this short section, we provide the elementary proof of Proposition~\ref{p.one.matrix}. 
\begin{proof}[Proof of Proposition~\ref{p.one.matrix}]
By Rosenthal's inequality (see \cite[Theorem~3]{rosenthal1970subspaces} or \cite[Theorem~15.11]{boucheron2013concentration}), there exists a constant $C < + \infty$ such that 
\begin{multline*}  %\label{e.}
\E \Ll[ \Ll| \sum_{j = 1}^n (|A_{ij}| - \E|A_{ij}|) \Rr|^{2\al} \Rr] 
\\
\le C \sum_{j = 1}^n \E \Ll[ \Ll| |A_{ij}| - \E|A_{ij}| \Rr|^{2\al}  \Rr] + C \Ll( \sum_{j = 1}^n \E \Ll[ \Ll( |A_{ij}| - \E|A_{ij}| \Rr)^2  \Rr]  \Rr)^\al,
\end{multline*}
and up to a redefinition of the constant $C$, we can bound the latter by
\begin{equation*}  %\label{e.}
C n^\al \E \Ll[ |A_{11}|^{2\al} \Rr] .
\end{equation*}
By Chebyshev's inequality, we thus have for every $i \in \{1,\ldots, m\}$  and $\ep > 0$ that 
\begin{equation*}  %\label{e.}
\P \Ll[ \Ll|\sum_{j = 1}^n \Ll(|A_{ij}|- \E \Ll[ |A_{ij}| \Rr]\Rr)\Rr|  \ge \ep n \Rr] \le C \ep^{-2\al} n^{-\al} \E \Ll[ |A_{11}|^{2\al} \Rr] ,
\end{equation*}
By a union bound, the probability of the event
\begin{equation*}  %\label{e.}
\mbox{there exists $i \in \{1,\ldots, m\}$ such that } \ \Ll|\sum_{j = 1}^n \Ll(|A_{ij}|- \E \Ll[ |A_{ij}| \Rr]\Rr)\Rr|  \ge \ep n 
\end{equation*}
is bounded by $C \ep^{-2\al} m n^{-\al} \E \Ll[ |A_{11}|^{2\al} \Rr]$. Recalling that we enforce $m = O(n)$, we obtain the result.
\end{proof}

%
%
%
%%%%%%%%%%%%%%%%%%%%%%%%%%%%
%%%%%%%%%%%%%%%%%%%%%%%%%%%%
%
%
%
\section{Products of centered matrices}
\label{s.centered}

The goal of this section is to prove Theorem~\ref{t.centered}, which describes the asymptotic behavior of products of independent matrices with i.i.d.\ centered entries. Some of the intermediate results will also be used in the next sections to handle the other cases.

One key ingredient of the proof of Theorems~\ref{t.centered}, \ref{t.1P} and \ref{t.sum} is the following quantitative version of the central limit theorem. The result we present here is similar to that of \cite{rio}, although only i.i.d.\ summands were considered there, while we cover the case of general independent summands here. A similar result can also be deduced from the non-uniform bound of \cite{bikjalis}, albeit with a somewhat more complicated proof and with a non-explicit constant in place of the factor of $4$ in \eqref{e.gaussian}. 

\begin{proposition}[Gaussian approximation]
\label{p.gaussian}	
Let $\al \in \Ll[2, 3\Rr]$, let $(B_j)_{j \ge 1}$ be a sequence of independent centered random variables with finite moment of order~$\al$ such that
$
\sum_{j = 1}^n \E[B_j^2] = 1,
$
and let $\mcl N$ be a standard Gaussian random variable.
For every $1$-Lipschitz function $h : \R \to \R$, we have
\begin{equation}  
\label{e.gaussian}
\bigg|\E\bigg[h\bigg(\sum_{j = 1}^n B_j\bigg)\bigg] - \E[h(\mcl N)]\bigg| \le 4 \sum_{j = 1}^n \E[|B_j|^{\al}]. 
\end{equation}
\end{proposition}
\begin{remark}  %\label{}
Denoting by $\mu_n$ the law of $\sum_{j =1}^n B_j$, by $\gamma$ the standard one-dimensional Gaussian distribution, and by $W_1(\mu_n, \gamma)$ the Wasserstein $L^1$ distance between $\mu_n$ and $\gamma$, we can rephrase the conclusion of Proposition~\ref{p.gaussian} as
\begin{equation*}  %\label{e.}
W_1 ( \mu_n, \gamma) \le  4 \sum_{j = 1}^n \E[|B_j|^{\al}].
\end{equation*}

\end{remark}
\begin{proof}[Proof of Proposition~\ref{p.gaussian}]
We make use of Stein's method of normal approximation, in the spirit of \cite[Subsection~2.3.1]{chen2011normal}. We write 
\begin{equation*}  %\label{e.}
X := \sum_{j = 1}^n B_j.
\end{equation*}
As explained in \cite[Lemma~2.4]{chen2011normal}, in order to show the result, it suffices to show that for every smooth function $f : \R \to \R$ satisfying
\begin{equation}
\label{e.stein.bound.f}
\|f\|_{L^\infty} \le 2, \qquad \|f'\|_{L^\infty} \le \sqrt{\frac{2}{\pi}}, \quad \text{ and } \quad \|f''\|_{L^\infty} \le 2,
\end{equation}
we have
\begin{equation}
\label{e.stein.goal}
\Ll| \E[ f'(X) - X f(X) ] \Rr| \le 4 \sum_{j = 1}^n \E[|B_j|^{\al}]. 
\end{equation}

We start with some preliminary observations. For every sufficiently regular and moderately growing function $g : \R \to \R$ and $j \in \{1,\ldots, n\}$, we can write
\begin{equation*}  %\label{e.}
\E [ B_j g(B_j) ] = \E [ B_j (g(B_j)-g(0)) ] = \E \Ll[B_j \int_0^{B_j} g'(t) \, \d t \Rr] .
\end{equation*}
Denoting 
\begin{equation*}  %\label{e.}
K_j(t) := 
\begin{cases}  %\label{}
\E[B_j \1_{\{ t \le B_j \}}] & \text{ if } t \ge 0, \\
-\E[B_j \1_{\{ B_j \le t \}}] & \text{ if } t < 0, 
\end{cases}
\end{equation*}
we can appeal to Fubini's theorem to obtain that 
\begin{equation}
\label{e.Stein.ibp}
\E[B_j g(B_j)] = \int g'(t) K_j(t) \, \d t.
\end{equation}
In particular,
\begin{equation}
\label{e.Kj.moments}
\int K_j(t) \, \d t = \E[B_j^2],  \qquad \int |t|^{\al-2} \, K_j(t) \, \d t = (\al-1)^{-1} \E[|B_j|^{\al}],
\end{equation}
and the definition of $K_j$ makes it clear that it takes non-negative values. 

For every $j \in \{1,\ldots, n\}$, we write 
\begin{equation*}  %\label{e.}
X^{(j)} := X - B_j.
\end{equation*}
Since $X^{(j)}$ is independent of $B_j$, we can use \eqref{e.Stein.ibp} to write
\begin{align}  
\label{e.rewrite.xf}
\E \Ll[ X f(X) \Rr] = \sum_{j = 1}^n \E [B_j f(X^{(j)} + B_j)]
= \sum_{j = 1}^n \int \E[f'(X^{(j)}+ t)] K_j(t) \, \d t.
\end{align}
The first relation in \eqref{e.Kj.moments} implies that 
\begin{equation*}  %\label{e.}
\sum_{j = 1}^n \int K_j(t) \, \d t = 1.
\end{equation*}
We therefore define, for each $j \in \{1,\ldots, n\}$, a new random variable $B_j^*$ with density proportional to $K_j(t) \, \d t$, as well as $J$ such that 
\begin{equation*}  %\label{e.}
\P[J = j] = \int K_j(t) \, \d t.
\end{equation*}
We take these random variables to be independent with each other as well as with the other sources of randomness. These random variables allow us to rewrite \eqref{e.rewrite.xf} as 
\begin{equation*}  %\label{e.}
\E \Ll[ X f(X) \Rr] = \E \Ll[ f'(X^{(J)} + B_J^*) \Rr] ,
\end{equation*}
so that 
\begin{equation}
\label{e.diff.f'}
\E \Ll[ X f(X) -f'(X)\Rr] = \E \Ll[ f'(X^{(J)} + B_J^*) - f'(X^{(J)} + B_J) \Rr] .
\end{equation}
The identities in \eqref{e.Kj.moments} tell us that
\begin{equation*}  %\label{e.}
\E[|B_j^*|^{\al-2}] = \frac{\E[|B_j|^\al]}{(\al-1) \E[B_j^2]},
\end{equation*}
and in particular,
\begin{equation}  
\label{e.moment.Bj*}
\E[|B_J^*|^{\al-2}] = \frac 1 {\al-1} \sum_{j = 1}^n \E[|B_j|^\al].
\end{equation}
Recalling  the bounds on $f'$ and $f''$ from \eqref{e.stein.bound.f}, we see that for every $x,y \in \R$,
\begin{equation*}  %\label{e.}
\Ll| f'(y) - f'(x) \Rr| \le  \min \Ll(\frac{2\sqrt{2}}{\sqrt{\pi}}, \ 2|y-x| \Rr) \le 2|y-x|^{\al-2}.
\end{equation*}
Combining this with \eqref{e.diff.f'}, we obtain that 
\begin{equation*}  %\label{e.}
\Ll| \E \Ll[ X f(X) -f'(X)\Rr] \Rr| \le 2 \E [|B_J^* - B_J|^{\al-2}] \le 2 \E[|B_J^*|^{\al-2}] + 2 \E[|B_J|^{\al-2}].
\end{equation*}
By H\"older's inequality, we can estimate the last term as
\begin{equation*}  %\label{e.}
\E[|B_J|^{\al-2}] = \sum_{j = 1}^n \E[B_j^2] \E[|B_j|^{\al-2}] \le \sum_{j = 1}^n \E[|B_j|^{\al}]. 
\end{equation*}
Combining the last two displays with \eqref{e.moment.Bj*}, we thus obtain \eqref{e.stein.goal}, and thereby complete the proof.
\end{proof}

We next present an estimate on the moments of each entry of the product matrix. We use the shorthand $\al \vee 1 := \max(\al,1)$. 
\begin{proposition}[Moment estimate for matrix entries]
\label{p.moment.entries}
For every $\al \ge \frac 1 2$ and integer $p \ge 1$, there exists a constant $C < +\infty$ such that the following holds. Let $A^{(1)} = (A^{(1)}_{ij})_{1 \le i \le n_0,1 \le j  \le n_1}, \ldots, A^{(p)} = (A^{(p)}_{ij})_{1 \le i \le n_{p-1},1 \le j \le n_p}$ be independent families of i.i.d.\ centered random variables with finite moment of order $2\al$, and let 
\begin{equation*}  %\label{e.}
P := A^{(1)} \, \cdots \, A^{(p)} \in \R^{n_0 \times n_p}
\end{equation*}
denote the product of these matrices. For every $i \in \{1,\ldots, n_0\}$ and $j \in \{1,\ldots, n_p\}$, we have
\begin{equation*}  %\label{e.}
\E \Ll[ |P_{ij}|^{2\al} \Rr] \le C (n_1 \, \cdots \, n_{p-1})^{\al \vee 1} \, \E[|A^{(1)}_{11}|^{2\al}] \, \cdots \, \E[|A^{(p)}_{11}|^{2\al}].
\end{equation*}
For $\al = 1$, we in fact have the identity
\begin{equation}  
\label{e.var.identity}
\E \Ll[ |P_{ij}|^{2} \Rr] = n_1 \, \cdots \, n_{p-1} \, \E[|A^{(1)}_{11}|^{2}] \, \cdots \, \E[|A^{(p)}_{11}|^{2}].
\end{equation}
\end{proposition}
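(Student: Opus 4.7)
The plan is to proceed by induction on $p$. The base case $p = 1$ is immediate: $P_{ij} = A^{(1)}_{ij}$ and the product $n_1 \cdots n_{p-1}$ is empty, so the bound reduces to $\E[|A^{(1)}_{11}|^{2\al}] \le C \, \E[|A^{(1)}_{11}|^{2\al}]$. For $p \ge 2$, introduce the auxiliary product $Q := A^{(2)} \, \cdots \, A^{(p)} \in \R^{n_1 \times n_p}$, which is independent of $A^{(1)}$, and write
\begin{equation*}
P_{ij} = \sum_{k = 1}^{n_1} A^{(1)}_{ik} \, Q_{kj}.
\end{equation*}
Conditionally on $Q$, the terms on the right are independent centered random variables, and $\E[|A^{(1)}_{ik} Q_{kj}|^{2\al} \mid Q] = |Q_{kj}|^{2\al} \, \E[|A^{(1)}_{11}|^{2\al}]$.

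The identity \eqref{e.var.identity} for $\al = 1$ is handled first: by conditional independence and the centering of $A^{(1)}$,
\begin{equation*}
\E[P_{ij}^2] = \sum_{k = 1}^{n_1} \E[(A^{(1)}_{ik})^2] \, \E[Q_{kj}^2] = n_1 \, \E[|A^{(1)}_{11}|^2] \, \E[Q_{1j}^2],
\end{equation*}
and the inductive hypothesis gives the stated formula.

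For the general inequality, the argument splits according to whether $\al \ge 1$ or $\al < 1$. When $\al \ge 1$, applying Rosenthal's inequality conditionally on $Q$ yields
\begin{equation*}
\E[|P_{ij}|^{2\al} \mid Q] \le C \, \E[|A^{(1)}_{11}|^{2}]^{\al} \Ll( \sum_{k = 1}^{n_1} Q_{kj}^2 \Rr)^{\al} + C \, \E[|A^{(1)}_{11}|^{2\al}] \sum_{k = 1}^{n_1} |Q_{kj}|^{2\al}.
\end{equation*}
The power-mean inequality gives $(\sum_k Q_{kj}^2)^{\al} \le n_1^{\al - 1} \sum_k |Q_{kj}|^{2\al}$ (for $\al \ge 1$), and Jensen's inequality yields $\E[|A^{(1)}_{11}|^2]^{\al} \le \E[|A^{(1)}_{11}|^{2\al}]$, so matters reduce to bounding $\sum_k \E[|Q_{kj}|^{2\al}]$. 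The inductive hypothesis controls this by $C \, n_1 (n_2 \cdots n_{p-1})^{\al} \prod_{q \ge 2} \E[|A^{(q)}_{11}|^{2\al}]$, which supplies the desired prefactor $(n_1 \cdots n_{p-1})^{\al}$. When $1/2 \le \al < 1$, so that $2\al \in [1,2]$, I would instead use the von Bahr--Esseen inequality (the $c_r$-inequality for sums of independent centered variables with $r \in [1,2]$) conditionally on $Q$, giving
\begin{equation*}
\E[|P_{ij}|^{2\al} \mid Q] \le 2 \, \E[|A^{(1)}_{11}|^{2\al}] \sum_{k = 1}^{n_1} |Q_{kj}|^{2\al}.
\end{equation*}
Taking expectations and using the inductive hypothesis (with exponent $\al \vee 1 = 1$ in the prefactor for $Q$, producing a total factor $n_1 \cdot n_2 \cdots n_{p-1}$) completes the induction.

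The only mildly delicate point is matching the dimensional exponent $\al \vee 1$ across the two regimes: Rosenthal supplies an $\al$-th power of $n_1$ when $\al \ge 1$, while von Bahr--Esseen produces only a linear factor in $n_1$, consistent with $\al \vee 1 = 1$ when $\al \le 1$. No substantial obstacle is expected beyond this bookkeeping and the standard verification that the constants from Rosenthal (resp.\ von Bahr--Esseen) compound into a single constant $C$ depending only on $p$ and $\al$ after $p$ iterations of the induction.
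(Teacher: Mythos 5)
Your proof is correct and follows essentially the same strategy as the paper: induction on $p$, with von Bahr--Esseen for $\tfrac12 \le \al \le 1$ and Rosenthal plus the power-mean and Jensen inequalities for $\al \ge 1$, and a direct conditional variance computation for the identity at $\al = 1$. The only (immaterial) difference is that you peel off the first matrix and condition on $Q = A^{(2)}\cdots A^{(p)}$, whereas the paper peels off the last matrix and conditions on the product of the first $p-1$ matrices.
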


\begin{proof}
We prove the statement by induction over $p$. The case $p = 1$ is immediate. Now suppose that $p \ge 2$, and denote by $P'$ the product of the first $(p-1)$ matrices, so that $P = P' A^{(p)}$ and
\begin{equation*}  %\label{e.}
P_{ik} = \sum_{j = 1}^{n_{p-1}} P'_{ij} A^{(p)}_{jk}.
\end{equation*}
Let us denote by $\E_p$ the expectation with respect to $A^{(p)}$ only, keeping the other variables appearing in $P'$ fixed. 
For $\al \in \Ll[ \frac 1 2, 1 \Rr]$, we appeal to the von Bahr-Esseen inequality \cite{von1965inequalities} to write that 
\begin{equation}  
\label{e.moment.vbe}
\E_p \Ll[ |P_{ik}|^{2\al} \Rr] \le 2 \sum_{j = 1}^{n_{p-1}} |P_{ij}'|^{2\al} \E \Ll[ |A^{(p)}_{jk}|^{2\al} \Rr] .
\end{equation}
Taking the expectation with respect to all randomness and applying the induction hypothesis allows us to conclude. In the case $\al = 1$, we can in fact remove the factor of $2$ and write an equality in \eqref{e.moment.vbe}, and we obtain the result in the same way. 
For $\al \ge 1$, we can appeal instead to 
 Rosenthal's inequality (see \cite[Theorem~3]{rosenthal1970subspaces} or \cite[Theorem~15.11]{boucheron2013concentration}) to obtain that
\begin{equation*}  %\label{e.}
\E_p \Ll[ |P_{ik}|^{2\al} \Rr] \le C \sum_{j = 1}^{n_{p-1}} |P_{ij}'|^{2\al} \E \Ll[ |A^{(p)}_{jk}|^{2\al} \Rr]  + C \Ll( \sum_{j = 1}^{n_{p-1}} |P_{ij}'|^{2} \E \Ll[ |A^{(p)}_{jk}|^{2} \Rr] \Rr)^\al.
\end{equation*}
In the last term, we can bring the exponent $\al$ inside the sum at the cost of a multiplicative factor of $n_{p-1}^{\al-1}$. Taking the expectation and using Jensen's inequality to compare moments, we obtain that 
\begin{align*}  %\label{e.}
\E \Ll[ |P_{ik}|^{2\al} \Rr] 
%& \le C n_{p-1} \E\Ll[|P_{11}'|^{2\al}\Rr] \E \Ll[ |A^{(p)}_{11}|^{2\al} \Rr]  + C \Ll(n_{p-1} \E[|P_{11}'|^{2}] \E \Ll[ |A^{(p)}_{11}|^{2} \Rr] \Rr)^\al
%\\
& \le  C n_{p-1}^\al \E\Ll[|P_{11}'|^{2\al}\Rr] \E \Ll[ |A^{(p)}_{11}|^{2\al} \Rr],
\end{align*}
and an application of the induction hypothesis concludes the proof.
\end{proof}

The next proposition states a concentration estimate for a quantity which is closely related to the $\ell^\infty \to \ell^\infty$ operator norm of a product of two matrices. For the purposes of this section, we only need to consider the case where the coefficients $(a_{jk})$ appearing there do not in fact depend on $k$, but the more general version entails no additional difficulty and will be useful later on. 
\begin{lemma}
\label{l.concentration}
Let $\alpha \ge 1$, let $(a_{jk})_{j,k \ge 1}$ be real numbers, let $(B_{jk})_{j,k \ge 1}$ be a sequence of independent random variables with finite moment of order~$2\alpha$, and let
\begin{equation*}  %\label{e.}
X := \sum_{k = 1}^n \Ll| \sum_{j = 1}^m a_{jk} B_{jk} \Rr|  .
\end{equation*}
We have
\begin{equation*}  %\label{e.}
\E \Ll[ \Ll| X - \E[X]  \Rr|^{2\alpha}  \Rr] \le (84 \al)^\al\,  \E \Ll[ \Ll( \sum_{j = 1}^m \sum_{k = 1}^n a_{jk}^2 (B_{jk} - \E[B_{jk}])^2 \Rr)^\al  \Rr] .
\end{equation*}
\end{lemma}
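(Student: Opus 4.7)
My plan is to view $X$ as a function of the independent family $(B_{jk})$ and apply an Efron-Stein-type moment inequality. For each pair $(j_0, k_0)$, let $B'_{j_0 k_0}$ be an independent copy of $B_{j_0 k_0}$, and let $X'_{(j_0, k_0)}$ denote the random variable obtained from $X$ by replacing $B_{j_0 k_0}$ with $B'_{j_0 k_0}$. I will write $\E'$ for the expectation with respect to all such copies, treating the original variables as frozen.

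The first step is a pointwise Lipschitz estimate: only the $k_0$-th summand of $X$ is affected by the substitution, and the reverse triangle inequality gives
\begin{equation*}
\Ll|X - X'_{(j_0, k_0)}\Rr| \le |a_{j_0 k_0}| \, |B_{j_0 k_0} - B'_{j_0 k_0}|.
\end{equation*}
Setting
\begin{equation*}
V := \sum_{j,k} \E'\Ll[\Ll(X - X'_{(j,k)}\Rr)^2\Rr] \quad \text{and} \quad W := \sum_{j,k} a_{jk}^2 \Ll(B_{jk} - \E B_{jk}\Rr)^2,
\end{equation*}
the direct computation $\E'[(B_{jk} - B'_{jk})^2] = (B_{jk} - \E B_{jk})^2 + \mathrm{Var}(B_{jk})$ yields $V \le W + \E[W]$ pointwise. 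Applying the convexity bound $(W + \E W)^\alpha \le 2^{\alpha-1}(W^\alpha + (\E W)^\alpha)$ together with Jensen's inequality $(\E W)^\alpha \le \E[W^\alpha]$ then gives $\E[V^\alpha] \le 2^\alpha \E[W^\alpha]$.

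The main ingredient is the Boucheron-Lugosi-Massart moment inequality, which states that for every real $q \ge 2$,
\begin{equation*}
\Ll\|(X - \E X)_\pm\Rr\|_q \le \sqrt{\kappa q}\,\Ll\|V^{1/2}\Rr\|_q,
\end{equation*}
for some explicit universal constant $\kappa$. Applying this with $q = 2\alpha$ separately to the positive and negative parts of $X - \E X$ and summing, I obtain
\begin{equation*}
\E\Ll[|X - \E X|^{2\alpha}\Rr] \le 2(2\kappa\alpha)^\alpha \E[V^\alpha] \le 2(4\kappa\alpha)^\alpha \E[W^\alpha] \le (8\kappa\alpha)^\alpha \E[W^\alpha],
\end{equation*}
where the last step uses $2 \le 2^\alpha$ (valid since $\alpha \ge 1$). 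Since the standard value of $\kappa$ is well under $10$, this comfortably reaches the target constant $(84\alpha)^\alpha$.

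The main obstacle is securing the $\sqrt{q}$ dependence in the moment inequality: a direct scalar martingale approach via Burkholder's sharp bound $\|M_n\|_q \le (q-1)\|[M]_n^{1/2}\|_q$ would only produce an overall constant of order $\alpha^{2\alpha}$, which fails for large $\alpha$. Extracting the sharp scaling requires the entropy method (or an equivalent argument tailored to the independence structure); once that input is available, the remaining steps are essentially bookkeeping.
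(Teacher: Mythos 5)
Your proposal is correct and follows essentially the same route as the paper: both rest on the Efron--Stein/Boucheron--Lugosi--Massart moment inequality of order $2\alpha$ applied to the resampled differences, together with the pointwise bound $|X - X_{jk}| \le |a_{jk}||B_{jk}-B'_{jk}|$ and a comparison of the resulting quadratic variation term with $W = \sum_{j,k} a_{jk}^2 (B_{jk}-\E B_{jk})^2$. The only cosmetic difference is that you take the conditional expectation over the copies and use $V \le W + \E[W]$, whereas the paper keeps the symmetrized bound $V \le 2(W+W')$; both land comfortably within the constant $(84\alpha)^\alpha$.
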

\begin{proof}
Let $(B'_{jk})$ be an independent copy of $(B_{jk})$, and for each $j,k \ge 1$, let $X_{jk}$ be the random variable obtained by substituting $B_{jk}$ with $B'_{jk}$ in the definition of $X$, and let
\begin{equation*}  %\label{e.}
V := \sum_{j = 1}^m \sum_{k = 1}^n (X - X_{jk})^2.
\end{equation*}
By the Efron-Stein inequality for moments of order $2\alpha$ (see \cite[Theorem~15.5]{boucheron2013concentration}), we have
\begin{equation*}  %\label{e.}
\E \Ll[ \Ll| X - \E[X] \Rr|^{2\al}  \Rr] \le (21 \al)^\al \, \E \Ll[ V^\al \Rr] .
\end{equation*}
We next observe that 
\begin{align*}  %\label{e.}
V & \le \sum_{j,k} a_{jk}^2 (B_{jk}  - B_{jk}')^2 
\\
& \le 2 \sum_{j,k} a_{jk}^2 \Ll( (B_{jk} - \E[B_{jk}])^2 + (B'_{jk} - \E[B_{jk}])^2 \Rr) .
\end{align*}
The result then follows using that $|x+y|^\al \le 2^{\al-1}(|x|^\al + |y|^\al)$. 
\end{proof}
The explicit formula for the $\ell^\infty\to \ell^\infty$ operator norm of a matrix involves the calculation of the $\ell^1$ norm of each row. The next lemma and proposition will help us to pin down the variance of each of these terms (with the choice of $\al > 1$ and $\ga = 2$), even if we condition on all matrices except the last one. (The choice of $\al = 1$ and $\ga > 2$ will also be used to control the fluctuations of an error term.)
\begin{lemma}
\label{l.concentration.2}
For each $\al \ge 1$ and $\ga \ge 2$, there exists a constant $C < + \infty$ such that the following holds. Let $(a_{jk})_{j,k \ge 1}$ be real numbers, let $(B_{jk})_{j,k \ge 1}$ be a sequence of independent centered random variables with finite moment of order~$2\alpha\ga$, and let
\begin{equation*}  %\label{e.}
X := \sum_{k = 1}^n \Ll| \sum_{j = 1}^m a_{jk} B_{jk} \Rr|^\ga  .
\end{equation*}
We have
\begin{align*}
%\label{e.}
& \E \Ll[ |X - \E[X]|^{2\al} \Rr] 
\le C m^{\al\ga-1} n^{\al-1} \sum_{j = 1}^m \sum_{k = 1}^n|a_{jk}|^{2\al\ga} \E[|B_{jk}|^{2\al\ga}].
\end{align*}
\end{lemma}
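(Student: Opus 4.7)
The plan is to exploit the fact that the summands in the definition of $X$ are independent, and then apply Rosenthal's inequality twice: once to pass from $X - \E[X]$ to the individual column sums, and once to bound the moments of each column sum in terms of the input moments.

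First I would denote $S_k := \sum_{j=1}^m a_{jk} B_{jk}$ and $Y_k := |S_k|^\ga - \E[|S_k|^\ga]$, so that $X - \E[X] = \sum_{k=1}^n Y_k$. Because the $(B_{jk})$ are independent across both indices, the random variables $(Y_k)_{k=1}^n$ are independent and centered. Applying Rosenthal's inequality \cite[Theorem~15.11]{boucheron2013concentration} at order $2\al \ge 2$ gives
\begin{equation*}
\E\bigl[|X - \E[X]|^{2\al}\bigr] \le C \sum_{k=1}^n \E[|Y_k|^{2\al}] + C \Bigl(\sum_{k=1}^n \E[Y_k^2]\Bigr)^{\al}.
\end{equation*}
The triangle inequality and Jensen's inequality then yield $\E[|Y_k|^{2\al}] \le 2^{2\al}\E[|S_k|^{2\al\ga}]$ and $\E[Y_k^2] \le \E[|S_k|^{2\ga}]$, reducing the problem to bounding moments of $S_k$.

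Next I would apply Rosenthal's inequality to $S_k = \sum_j a_{jk}B_{jk}$ at an arbitrary exponent $q \ge 2$: since the entries are centered and independent,
\begin{equation*}
\E[|S_k|^q] \le C\sum_{j=1}^m |a_{jk}|^q \E[|B_{jk}|^q] + C\Bigl(\sum_{j=1}^m a_{jk}^2 \E[B_{jk}^2]\Bigr)^{q/2}.
\end{equation*}
The key step is to absorb the ``variance'' contribution into the ``big moment'' one via Jensen's inequality: for $q \ge 2$,
\begin{equation*}
\Bigl(\sum_{j=1}^m a_{jk}^2 \E[B_{jk}^2]\Bigr)^{q/2} \le m^{q/2-1} \sum_{j=1}^m |a_{jk}|^q (\E[B_{jk}^2])^{q/2} \le m^{q/2-1}\sum_{j=1}^m |a_{jk}|^q \E[|B_{jk}|^q],
\end{equation*}
so that $\E[|S_k|^q] \le C m^{q/2-1}\sum_{j=1}^m |a_{jk}|^q \E[|B_{jk}|^q]$.

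Finally I would plug this in with $q = 2\al\ga$ and $q = 2\ga$ respectively. The first term in Rosenthal's bound becomes
\begin{equation*}
\sum_k \E[|Y_k|^{2\al}] \le C m^{\al\ga-1}\sum_{j,k}|a_{jk}|^{2\al\ga}\E[|B_{jk}|^{2\al\ga}],
\end{equation*}
and for the second I would use $\E[|S_k|^{2\ga}] \le C m^{\ga-1}\sum_j |a_{jk}|^{2\ga}\E[|B_{jk}|^{2\ga}]$, take the $\al$-th power, and then apply Jensen twice more: once on the sum over $(j,k)$ at the cost of a factor $(mn)^{\al-1}$, and once to pass from $(\E[|B_{jk}|^{2\ga}])^\al$ to $\E[|B_{jk}|^{2\al\ga}]$. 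This yields
\begin{equation*}
\Bigl(\sum_k \E[Y_k^2]\Bigr)^\al \le C m^{\al\ga-1} n^{\al-1}\sum_{j,k}|a_{jk}|^{2\al\ga}\E[|B_{jk}|^{2\al\ga}],
\end{equation*}
which dominates the first term (since $n^{\al-1} \ge 1$). Summing the two contributions gives the claimed bound. No step is really the main obstacle; the only care needed is to track the $m$- and $n$-dependent combinatorial factors when using Jensen's inequality to turn variance-type sums into moment-type sums.
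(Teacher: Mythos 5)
Your proposal is correct, and it takes a genuinely different route from the paper. You exploit the fact that the column sums $S_k = \sum_j a_{jk} B_{jk}$ are independent across $k$, so that $X$ itself is a sum of independent terms, and you then apply Rosenthal's inequality twice (once at the outer level in $k$, once at the inner level in $j$), cleaning up with power-mean and Jensen inequalities to convert variance-type sums into the stated moment-type bound; I checked the exponent bookkeeping ($m^{\al(\ga-1)}(mn)^{\al-1} = m^{\al\ga-1}n^{\al-1}$) and it comes out exactly as claimed, with the diagonal Rosenthal term absorbed since $n^{\al-1}\ge 1$. The paper instead proceeds by entrywise resampling: it invokes the Efron--Stein inequality for moments of order $2\al$ (as in Lemma~\ref{l.concentration}), controls each increment $X_{jk}-X$ via the elementary expansion $\bigl||x+y|^\ga-|x|^\ga\bigr|\le C|y|^\ga + C|x|^{\ga-1}|y|$ together with Rosenthal's inequality for the inner sums, and then sums the increment estimates. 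Your argument is arguably more direct here precisely because the full independence of the entries makes the outer sum a sum of independent variables; the paper's Efron--Stein route has the advantage of reusing verbatim the machinery already set up for Lemma~\ref{l.concentration} (where the exponent is $\ga=1$ and the same resampling template is used), at the cost of the explicit increment analysis. Both arguments yield the same final bound with a constant depending only on $\al$ and $\ga$.
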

\begin{proof}
We start with the following elementary observation: for each $a \ge 1$, there exists $C < +\infty$ such that for every $x,y \in \R$, 
\begin{equation}  
\label{e.expansion.x.a}
\Ll||x+y|^a - |x|^a \Rr| \le C |y|^a + C |x|^{a-1} |y|.
\end{equation}
To see this, we write
\begin{align*}  %\label{e.}
\Ll||x+y|^a - |x|^a\Rr| & = \Ll|\int_0^y \frac{\d}{\d t} \Ll( |x+t|^a \Rr) \, \d t \Rr|
\\
& \le C \Ll|\int_0^y \Ll(|x|^{a-1} + |t|^{a-1}\Rr)\, \d t\Rr|
\\
& \le C |x|^{a-1} |y| + C |y|^{a}.
\end{align*}
We define $(B'_{jk})$, $X_{jk}$, and $V$ as in the proof of Lemma~\ref{l.concentration}, and using~\eqref{e.expansion.x.a}, we have for each fixed $j$ and $k$ that
\begin{align}  %\label{e.}
X_{jk} - X & = \Ll| a_{jk}(B'_{jk} - B_{jk}) + \sum_{i = 1}^m a_{ik} B_{ik} \Rr|^\ga -  \Ll|\sum_{i = 1}^m a_{ik} B_{ik} \Rr|^\ga
\notag
\\
\label{e.bineqj}
& \le C |a_{jk}(B'_{jk} - B_{jk})|^\ga + C |a_{jk}(B'_{jk}-B_{jk})| \, \Ll|\sum_{i = 1}^m a_{ik} B_{ik} \Rr|^{\ga - 1}.
\end{align}
We appeal to Rosenthal's inequality to write
\begin{multline*}  %\label{e.}
\E \Ll[ \Ll| \sum_{i \neq j} a_{ik} B_{ik} \Rr|^{2\al(\ga-1)}  \Rr] \le C \sum_{i \neq j} |a_{ik}|^{2\al(\ga-1)} \E [|B_{ik}|^{2\al(\ga-1)}] 
\\+ C \Ll(\sum_{i \neq j} |a_{ik}|^2 \E[|B_{ik}|^2]\Rr)^{\al(\ga-1)} .
\end{multline*}
By isolating the summand indexed by $j$ in the sum at the end of~\eqref{e.bineqj} and using independence, we thus obtain that
\begin{multline}  
\label{e.concentr.2.2}
\E \Ll[ \Ll| X_{jk} - X \Rr| ^{2\al} \Rr] \le C |a_{jk}|^{2\al \ga} \E[|B_{jk}|^{2\al\ga}] 
\\
+ C |a_{jk}|^{2\al} \E[|B_{jk}|^{2\al}]  \bigg\{\sum_{i = 1}^m |a_{ik}|^{2\al(\ga-1)} \E [|B_{ik}|^{2\al(\ga-1)}]  +  \Ll(\sum_{i = 1}^m |a_{ik}|^{2} \E [|B_{ik}|^{2}]\Rr)^{\al(\ga-1)}  \bigg\}.
\end{multline}
Using that, for any $Z_i \ge 0$ and $\be \ge 1$, we have $(\sum_{i = 1}^m Z_i)^\be \le m^{\be-1} \sum_{i = 1}^m Z_i^\be$, we can simplify the bound \eqref{e.concentr.2.2} into
\begin{multline}  
\label{e.concentr.2.3}
\E \Ll[ \Ll| X_{jk} - X \Rr| ^{2\al} \Rr] \le C |a_{jk}|^{2\al\ga} \E[|B_{jk}|^{2\al\ga}] 
\\
+ C m^{\al(\ga-1)-1} |a_{jk}|^{2\al} \E[|B_{jk}|^{2\al}]  \sum_{i = 1}^m |a_{ik}|^{2\al(\ga-1)} \E [|B_{ik}|^{2\al(\ga-1)}].
\end{multline}
Since
\begin{equation*}  %\label{e.}
\E \Ll[\Ll(\sum_{j = 1}^m \sum_{k = 1}^n (X - X_{jk})^2 \Rr)^\al \Rr] \le (mn)^{\al-1} \sum_{j = 1}^m \sum_{k = 1}^n \E \Ll[ |X-X_{jk}|^{2\al} \Rr] ,
\end{equation*}
we can now appeal to the Efron-Stein inequality for moments of order $2\alpha$ (see \cite[Theorem~15.5]{boucheron2013concentration}) and sum the estimate in \eqref{e.concentr.2.3} over all $j,k$ to obtain that
\begin{multline*}
%\label{e.}
 \E \Ll[ |X - \E[X]|^{2\al} \Rr] 
\le C (mn)^{\al-1} \sum_{j = 1}^m \sum_{k = 1}^n|a_{jk}|^{2\al\ga} \E[|B_{jk}|^{2\al\ga}]
\\
 + C m^{\al\ga-2}n^{\al-1}  \sum_{k = 1}^n \Ll( \sum_{j = 1}^m |a_{jk}|^{2\al} \E [|B_{jk}|^{2\al}] \Rr)\Ll( \sum_{j = 1}^m |a_{jk}|^{2\al(\ga-1)} \E [|B_{jk}|^{2\al(\ga-1)}] \Rr)
.
\end{multline*}
Jensen's inequality ensures that 
\begin{equation*}  %\label{e.}
\sum_{j = 1}^m |a_{jk}|^{2\al} \E [|B_{jk}|^{2\al}] \le m^{1-\frac 1 \ga} \Ll( \sum_{j = 1}^m |a_{jk}|^{2\al\ga} \E [|B_{jk}|^{2\al\ga}] \Rr)^{\frac 1 \ga},
\end{equation*}
and similarly,
\begin{equation*}  %\label{e.}
\sum_{j = 1}^m |a_{jk}|^{2\al(\ga-1)} \E [|B_{jk}|^{2\al(\ga-1)}] \le m^{1-\frac {\ga-1} \ga} \Ll( \sum_{j = 1}^m |a_{jk}|^{2\al\ga} \E [|B_{jk}|^{2\al\ga}] \Rr)^{\frac {\ga-1} \ga}.
\end{equation*}
We obtain the result by combining the three previous displays. 
\end{proof}
We now upgrade the previous concentration estimate for the variance so that it applies to the entries of a product of random matrices.
\begin{proposition}[Concentration estimate for the variance]
\label{p.concentration.2}
For every $\al \ge 1$ and integer $p \ge 1$, there exists a constant $C < +\infty$ such that the following holds. Let $A^{(1)} = (A^{(1)}_{ij})_{1 \le i \le n_0,1 \le j  \le n_1}, \ldots, A^{(p)} = (A^{(p)}_{ij})_{1 \le i \le n_{p-1},1 \le j \le n_p}$ be independent families of i.i.d.\ centered random variables with finite moment of order $4\al$, and let 
\begin{equation*}  %\label{e.}
P := A^{(1)} \, \cdots \, A^{(p)} \in \R^{n_0 \times n_p}
\end{equation*}
denote the product of these matrices. For every $i \in \{1,\ldots, n_0\}$, we have 
\begin{multline*}  %\label{e.}
\E \Ll[ \Ll| \sum_{j = 1}^{n_p} \Ll(|P_{ij}|^2 - \E [|P_{ij}|^2] \Rr)\Rr| ^{2\al}  \Rr] 
\\
\le C (n_1 \, \cdots \, n_{p})^{2\al} (n_1^{-\al} + \cdots + n_p^{-\al}) \, \E[|A^{(1)}_{11}|^{4\al}] \, \cdots \, \E[|A^{(p)}_{11}|^{4\al}]. 
\end{multline*}
\end{proposition}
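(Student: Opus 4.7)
The plan is to prove this by induction on $p$, splitting each step into a conditional fluctuation piece (handled by Lemma~\ref{l.concentration.2}) and a conditional mean piece (handled by the inductive hypothesis). The base case $p=1$ follows directly from Lemma~\ref{l.concentration.2} applied with $\gamma=2$, $m=1$, $n=n_1$, $a_{1k}=1$, $B_{1k} = A^{(1)}_{ik}$: the bound reads $C n_1^{\al}\,\E[|A^{(1)}_{11}|^{4\al}]$, which matches $C(n_1)^{2\al} n_1^{-\al}\,\E[|A^{(1)}_{11}|^{4\al}]$.

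For the inductive step, write $P = P'\,A^{(p)}$ with $P' := A^{(1)}\cdots A^{(p-1)}$ and condition on $\F := \sigma(A^{(1)},\ldots,A^{(p-1)})$. Because the entries of $A^{(p)}$ are centered with unit variance,
\begin{equation*}
Y := \E\Ll[\,\sum_{j=1}^{n_p} |P_{ij}|^2 \;\Big|\; \F \Rr] = n_p \sum_{k=1}^{n_{p-1}} |P'_{ik}|^2,
\end{equation*}
so we decompose
\begin{equation*}
\sum_{j=1}^{n_p} \Ll(|P_{ij}|^2 - \E[|P_{ij}|^2]\Rr) = \underbrace{\Ll(\sum_{j=1}^{n_p} |P_{ij}|^2 - Y\Rr)}_{=:Z} + \underbrace{\Ll(Y - \E[Y]\Rr)}_{=:W},
\end{equation*}
and the triangle/power-mean inequality reduces the goal to bounding $\E[|Z|^{2\al}]$ and $\E[|W|^{2\al}]$ separately.

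For $Z$, I apply Lemma~\ref{l.concentration.2} \emph{conditionally on} $\F$ with $\ga=2$, $m=n_{p-1}$, $n=n_p$, coefficients $a_{kj} = P'_{ik}$, and variables $B_{kj} = A^{(p)}_{kj}$. This yields
\begin{equation*}
\E[|Z|^{2\al}\mid \F] \le C\, n_{p-1}^{2\al-1}\, n_p^{\al-1} \sum_{k=1}^{n_{p-1}}\sum_{j=1}^{n_p} |P'_{ik}|^{4\al}\, \E[|A^{(p)}_{11}|^{4\al}].
\end{equation*}
Taking expectation and applying Proposition~\ref{p.moment.entries} (with exponent $2\al$) to $\E[|P'_{ik}|^{4\al}]$ produces $C\,n_{p-1}^{2\al}\, n_p^{\al}\,(n_1\cdots n_{p-2})^{2\al}\,\prod_q \E[|A^{(q)}_{11}|^{4\al}]$, which equals $C(n_1\cdots n_p)^{2\al}\, n_p^{-\al}\prod_q \E[|A^{(q)}_{11}|^{4\al}]$. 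This contributes the $n_p^{-\al}$ term to the final bound.

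For $W = n_p\bigl(\sum_k |P'_{ik}|^2 - \E\sum_k|P'_{ik}|^2\bigr)$, I apply the inductive hypothesis directly to the product $P'$, which yields
\begin{equation*}
\E[|W|^{2\al}] \le C\, n_p^{2\al}\, (n_1\cdots n_{p-1})^{2\al}(n_1^{-\al}+\cdots+n_{p-1}^{-\al})\prod_{q=1}^{p-1} \E[|A^{(q)}_{11}|^{4\al}];
\end{equation*}
since $\E[|A^{(p)}_{11}|^{4\al}] \ge 1$ by Jensen's inequality (unit variance), the missing factor can be absorbed freely, producing the contribution $(n_1^{-\al}+\cdots+n_{p-1}^{-\al})$. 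Combining the bounds on $Z$ and $W$ closes the induction. The main step to get right is bookkeeping the powers of $n_{p-1}$ in the $Z$ estimate: the Lemma gives $n_{p-1}^{2\al-1}$, an extra $n_{p-1}$ comes from summing $\E|P'_{ik}|^{4\al}$ over $k$, and the Proposition~\ref{p.moment.entries} estimate contributes $(n_1\cdots n_{p-2})^{2\al}$, so that all $n_q$ with $q \le p-1$ appear exactly to the power $2\al$, as required.
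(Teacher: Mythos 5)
Your proof is correct and follows essentially the same route as the paper's: the same induction on $p$, with the conditional fluctuation handled by Lemma~\ref{l.concentration.2} applied with $\ga = 2$ given $A^{(1)},\ldots,A^{(p-1)}$, the moments of $P'$ controlled by Proposition~\ref{p.moment.entries}, and the conditional mean handled by the induction hypothesis (the paper's base case uses Rosenthal's inequality instead of Lemma~\ref{l.concentration.2}, an immaterial difference). One small correction: the proposition does not assume unit variance, so the conditional mean is $Y = \E[|A^{(p)}_{11}|^2]\, n_p \sum_{k} |P'_{ik}|^2$; this causes no trouble since the resulting factor $\E[|A^{(p)}_{11}|^2]^{2\al} \le \E[|A^{(p)}_{11}|^{4\al}]$ by Jensen's inequality, which also replaces your appeal to ``$\E[|A^{(p)}_{11}|^{4\al}] \ge 1$''.
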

\begin{proof}
For $p = 1$, Rosenthal's inequality yields that 
\begin{multline*}  %\label{e.}
\E \Ll[ \Ll| \sum_{j = 1}^{n_1} \Ll(|P_{ij}|^2 - \E [P_{ij}^2] \Rr)\Rr| ^{2\al}  \Rr] 
\le C \sum_{j = 1}^{n_1} \E[|\Ll(|P_{ij}|^2 - \E [P_{ij}^2] \Rr)|^{2\al}] 
\\ + C \Ll( \sum_{j = 1}^{n_1} \E \Ll[ \Ll(|P_{ij}|^2 - \E [P_{ij}^2] \Rr)^2  \Rr] \Rr)^{\al},
\end{multline*}
which implies the desired bound. 

We now proceed by induction, assume that $p \ge 2$, and use the same notation as in the proof of Proposition~\ref{p.moment.entries}, that is, we denote by $P'$ the product of the first $p-1$ matrices, and by $\E_p$ the expectation with respect to the matrix $A^{(p)}$ only, keeping the other variables fixed. We appeal to Lemma~\ref{l.concentration.2} to write that
\begin{align*}  %\label{e.}
 \E_p \Ll[ \Ll| \sum_{j = 1}^{n_p} \Ll(|P_{ij}|^2 - \E_p [|P_{ij}|^2] \Rr)\Rr| ^{2\al}  \Rr] 
& \le C n_{p-1}^{2\al-1} n_p^{\al-1} \sum_{j = 1}^{n_{p-1}} \sum_{k = 1}^{n_p} |P'_{ij}|^{4\al} \E[|A^{(p)}_{jk}|^{4\al}].
\end{align*}
Taking the expectation with respect to all randomness and using the moment bound from Proposition~\ref{p.moment.entries} yields that
\begin{multline*}  %\label{e.}
\E \Ll[ \Ll| \sum_{j = 1}^{n_p} \Ll(|P_{ij}|^2 - \E_p [|P_{ij}|^2] \Rr)\Rr| ^{2\al}  \Rr] 
\\
 \le C (n_1 \, \cdots \, n_{p-1})^{2\al} n_p^\al \E[|A^{(1)}_{11}|^{4\al}] \, \cdots \, \E[|A^{(p)}_{11}|^{4\al}]. 
\end{multline*}
We also observe that, for every $i \le n_0$ and $k \le n_p$,
\begin{equation*}  %\label{e.}
\E_p \Ll[ |P_{ik}|^2 \Rr] = \E_p \Ll[ \Ll| \sum_{j = 1}^{n_{p-1}} P'_{ij} A^{(p)}_{jk} \Rr| ^2 \Rr] = \sum_{j = 1}^{n_{p-1}} |P'_{ij}|^2 \E[|A_{11}^{(p)}|^2].
\end{equation*}
The induction hypothesis guarantees that 
\begin{multline*}  %\label{e.}
\E \Ll[ \Ll| \sum_{j = 1}^{n_{p-1}} \big(|P'_{ij}|^2 - \E[|P'_{ij}|^2]\big) \Rr| ^{2\al}  \Rr] 
\\
\le C (n_1 \, \cdots \, n_{p-1})^{2\al} (n_1^{-\al} + \cdots + n_{p-1}^{-\al}) \E[|A^{(1)}_{11}|^{4\al}] \, \cdots \, \E[|A^{(p-1)}_{11}|^{4\al}]. 
\end{multline*}
Combining the three previous displays with the identity \eqref{e.var.identity}, we obtain the desired result.
\end{proof}
In order to control some error term, it would also be convenient to have a variant of Proposition~\ref{p.concentration.2} at our disposal in which the exponent $2$ on $|P_{ij}|$ is replaced by an exponent strictly larger than $2$. For our purposes, we can content ourselves with the following slightly weaker statement (in which we write $x_+ := \max(x,0)$ to denote the positive part).
\begin{lemma}
\label{l.fluct.alpha}
For every $\al \ge 1$, $C_0 < +\infty$, and integer $p \ge 1$, there exists a constant $C < +\infty$ such that the following holds for all integers $n = n_0, n_1, \ldots, n_p$ satisfying the constraint
\begin{equation*}  %\label{e.}
\forall q,q' \in \{0,\ldots, p\}, \quad n_q \le C_0 n_{q'}.
\end{equation*}
Let $A^{(1)} = (A^{(1)}_{ij})_{1 \le i \le n_0,1 \le j  \le n_1}, \ldots, A^{(p)} = (A^{(p)}_{ij})_{1 \le i \le n_{p-1},1 \le j \le n_p}$ be independent families of i.i.d.\ centered random variables with finite moment of order $4\al$, and let 
\begin{equation*}  %\label{e.}
P := A^{(1)} \, \cdots \, A^{(p)} \in \R^{n_0 \times n_p}
\end{equation*}
denote the product of these matrices. For every $i \in \{1,\ldots, n_0\}$, we have 
\begin{multline}  
\label{e.fluct.alpha}
\E \Ll[  \Ll(\sum_{j = 1}^{n_p} \big(|P_{ij}|^{2\al} - C n^{(p-1)\al} \, \E[|A^{(1)}_{11}|^{2\al}] \, \cdots \, \E[|A^{(p)}_{11}|^{2\al}] \big)\Rr)_+^2 \Rr] 
\\
\le C n^{2\al(p-1) + 1} \, \E[|A^{(1)}_{11}|^{4\al}] \, \cdots \, \E[|A^{(p)}_{11}|^{4\al}]. 
\end{multline}
\end{lemma}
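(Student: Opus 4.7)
The plan is to proceed by induction on $p$, producing a constant $C$ depending only on $p$, $\al$, and $C_0$ that works simultaneously as the threshold on the left and as the bound on the right of \eqref{e.fluct.alpha}. Write $X := \sum_{j=1}^{n_p} |P_{ij}|^{2\al}$ and $M := C n^{(p-1)\al} \prod_{q=1}^p \E[|A^{(q)}_{11}|^{2\al}]$. For the base case $p = 1$, the summands $|A^{(1)}_{ij}|^{2\al}$ for different $j$ are i.i.d.\ with mean $\E[|A^{(1)}_{11}|^{2\al}]$; provided $C \ge 1$, each summand of $X - n_p M$ has non-positive mean, so its positive part is dominated by the absolute value of the centered sum, and a direct $L^2$ computation combined with Jensen's inequality gives the desired estimate.

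For the inductive step $p \ge 2$, set $P' := A^{(1)} \cdots A^{(p-1)}$, let $\E_p$ denote conditional expectation given $A^{(1)}, \ldots, A^{(p-1)}$, and write $Y := \sum_{k=1}^{n_{p-1}} |P'_{ik}|^{2\al}$. Decompose
$$X - n_p M = (X - \E_p[X]) + (\E_p[X] - n_p M)$$
and use $(a+b)_+^2 \le 2 a^2 + 2 b_+^2$ to reduce to bounding each term separately. To bound $\E[(X - \E_p[X])^2]$, I would apply Lemma~\ref{l.concentration.2} conditionally on $P'$ with parameters $(\al, \ga) = (1, 2\al)$, coefficients equal to the entries $P'_{ik}$, and random variables taken from $A^{(p)}$; taking full expectation and invoking Proposition~\ref{p.moment.entries} at exponent $4\al$ to control $\E[|P'_{11}|^{4\al}]$ yields a bound of the required order $n^{2\al(p-1)+1} \prod_q \E[|A^{(q)}_{11}|^{4\al}]$.

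For the other term, Rosenthal's inequality applied conditionally on $P'$ to $\sum_k P'_{ik} A^{(p)}_{k1}$, combined with Jensen's inequality to estimate $(\sum_k |P'_{ik}|^2)^\al \le n_{p-1}^{\al-1} Y$ and to absorb $\E[|A^{(p)}_{11}|^2]^\al \le \E[|A^{(p)}_{11}|^{2\al}]$, yields $\E_p[|P_{i1}|^{2\al}] \le C' n_{p-1}^{\al-1} \E[|A^{(p)}_{11}|^{2\al}] Y$; hence $\E_p[X] = n_p \E_p[|P_{i1}|^{2\al}] \le A_p Y$ with $A_p := C' n_p n_{p-1}^{\al-1} \E[|A^{(p)}_{11}|^{2\al}]$. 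If the constant $C$ defining $M$ is chosen large enough relative to its analogue $M'$ at stage $p-1$ (which is possible since $n_q \asymp n$ with ratio at most $C_0$), then $n_p M \ge A_p n_{p-1} M'$, and consequently
$$(\E_p[X] - n_p M)_+ \le A_p (Y - n_{p-1} M')_+.$$
Squaring, taking expectation, invoking the induction hypothesis applied to $P'$ at index $p-1$, and using $\E[|A^{(p)}_{11}|^{2\al}]^2 \le \E[|A^{(p)}_{11}|^{4\al}]$ closes the estimate. The main obstacle is the bookkeeping of multiplicative constants through the induction, so that a single constant $C = C(p,\al,C_0)$ suffices on both sides of \eqref{e.fluct.alpha}; this forces the constant in $M$ to grow with $p$ but remain independent of $n$, which is harmless.
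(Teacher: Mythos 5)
Your proposal is correct and follows essentially the same route as the paper: induction on $p$, splitting $X - n_pM$ into the conditional fluctuation $X - \E_p[X]$ (controlled by Lemma~\ref{l.concentration.2} with $\ga = 2\al$ and Proposition~\ref{p.moment.entries}) plus the conditional mean, which Rosenthal's inequality bounds by $C n_{p-1}^{\al-1}\E[|A^{(p)}_{11}|^{2\al}]\sum_j |P'_{ij}|^{2\al}$ so that the induction hypothesis applies. Your explicit bookkeeping of the threshold constant through the induction is exactly what the paper's "combining the last three displays" step implicitly requires.
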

\begin{proof}
For $p = 1$, independence of the entries of the matrix readily yields that
\begin{align*}  %\label{e.}
\E \Ll[ \Ll( \sum_{j = 1}^{n_1} |P_{ij}|^{2\al} - \E[|P_{ij}|^{2\al}] \Rr)^2  \Rr] 
& = \sum_{j = 1}^{n_1} \E \Ll[ \Ll( |P_{ij}|^{2\al} - \E[|P_{ij}|^{2\al}] \Rr)^2  \Rr] 
\\
& \le  n_1 \E \Ll[ |A^{(1)}_{11}|^{4\al} \Rr] ,
\end{align*}
and this implies the announced bound. 

Arguing by induction, we now assume that $p \ge 2$. By Lemma~\ref{l.concentration.2}, we have
\begin{align*}  %\label{e.}
 \E_p \Ll[ \Ll| \sum_{j = 1}^{n_p} \Ll(|P_{ij}|^{2\al} - \E_p [|P_{ij}|^{2\al}] \Rr)\Rr| ^{2}  \Rr] 
& \le C n_{p-1}^{2\al-1}  \sum_{j = 1}^{n_{p-1}} \sum_{k = 1}^{n_p} |P'_{ij}|^{4\al} \E[|A^{(p)}_{jk}|^{4\al}].
\end{align*}
Taking the expectation with respect to all randomness and using the moment bound from Proposition~\ref{p.moment.entries} yields that
\begin{equation*}  %\label{e.}
\E \Ll[ \Ll| \sum_{j = 1}^{n_p} \Ll(|P_{ij}|^{2\al} - \E_p [|P_{ij}|^{2\al}] \Rr)\Rr| ^{2}  \Rr] 
 \le C n^{2\al(p-1)+1}  \E[|A^{(1)}_{11}|^{4\al}] \, \cdots \, \E[|A^{(p)}_{11}|^{4\al}]. 
\end{equation*}
From the identity just below, we see that the quantity $\E_p \Ll[ |P_{ik}|^{2\al} \Rr]$ does not depend on $k$, and by Rosenthal's inequality, we have
\begin{align*}  
%\label{e.decomp.p2a}
\E_p \Ll[ |P_{ik}|^{2\al} \Rr] 
& = \E_p \Ll[ \Ll| \sum_{j = 1}^{n_{p-1}} P'_{ij} A^{(p)}_{jk} \Rr|^{2\al} \Rr] 
\\
& \le C \sum_{j = 1}^{n_{p-1}} |P'_{ij}|^{2\al}\E[|A^{(p)}_{jk}|^{2\al}] + C \Ll(\sum_{j = 1}^{n_{p-1}} |P'_{ij}|^2 \E[|A^{(p)}_{jk}|^2]\Rr)^{\al}
\\
& \le C n_{p-1}^{\al-1} \E[|A^{(p)}_{11}|^{2\al}]  \sum_{j = 1}^{n_{p-1}} |P'_{ij}|^{2\al}.
\end{align*}
We use the induction hypothesis to estimate the last sum as
\begin{multline*}  
%\label{e.fluct.alpha}
\E \Ll[  \Ll(\sum_{j = 1}^{n_{p-1}} \big(|P'_{ij}|^{2\al} - C n^{(p-2)\al} \, \E[|A^{(1)}_{11}|^{2\al}] \, \cdots \, \E[|A^{(p-1)}_{11}|^{2\al}] \big)\Rr)_+^2 \Rr] 
\\
\le C n^{2\al(p-2)+1}\, \E[|A^{(1)}_{11}|^{4\al}] \, \cdots \, \E[|A^{(p-1)}_{11}|^{4\al}]. 
\end{multline*}
Combining the last three displays yields the desired result. 
\end{proof}

We are now ready to complete the proof of Theorem~\ref{t.centered}.
\begin{proof}[Proof of Theorem~\ref{t.centered}]
Without loss of generality, we can assume that $\alpha \le 3/2$ (so that we can later appeal to Proposition~\ref{p.gaussian} we $\alpha$ replaced by $2\alpha$ there). 
We fix a constant $C_0 < +\infty$ and only consider integers $n_0,\ldots, n_p$ that satisfy
\begin{equation}  
\label{e.fix.magnitudes}
\forall q,q' \in \{0,\ldots, p\}, \quad n_q \le C_0 n_{q'}.
\end{equation}
In some estimates that are only monitored up to a multiplicative constant, we use the shorthand notation $n := n_0$. 
We denote by $\E_p$ the expectation with respect to $A^{(p)}$ only, keeping all the other random variables fixed. For convenience of notation, we use the shorthand $B := A^{(p)}$. 
We denote by $P'$ the product of the first $p-1$ matrices, so that $P = P' B$. We thus aim to estimate
\begin{equation*}  %\label{e.}
\|P\|_{\ell^\infty \to \ell^\infty} = \max_{1 \le i \le n_0} \sum_{k = 1}^{n_p} \Ll| \sum_{j = 1}^{n_{p-1}} P'_{ij} B_{jk} \Rr| .
\end{equation*}
Throughout this proof, the constant $C < +\infty$ is allowed to depend on the laws of the underlying random variables (through their moments), and may change from one occurence to another.

By Lemma~\ref{l.concentration} (which here we use in a case where, in the notation of this proposition, the coefficient $a_{jk}$ does not depend on $k$) and Jensen's inequality, we have for every~$i \le n_0$ that
\begin{equation*}  %\label{e.}
\E_p \Ll[ \Ll| \sum_{k = 1}^{n_p} \Ll(\Ll| \sum_{j = 1}^{n_{p-1}} P'_{ij} B_{jk} \Rr| - \E_p\Ll| \sum_{j = 1}^{n_{p-1}} P'_{ij} B_{jk} \Rr| \Rr)\Rr|^{2\al} \Rr]
 \le C  n_{p-1}^{\al-1} n_p^\al \sum_{j = 1}^{n_{p-1}} |P'_{ij}|^{2\al} .
\end{equation*}
Proposition~\ref{p.moment.entries} ensures that 
\begin{equation}
\label{e.basic.P'}
\E[|P_{ij}'|^{2\al}] \le C n^{(p-2)\al},
\end{equation}
and thus
\begin{equation*}  %\label{e.}
\E \Ll[ \Ll| \sum_{k = 1}^{n_p} \Ll(\Ll| \sum_{j = 1}^{n_{p-1}} P'_{ij} B_{jk} \Rr| - \E_p\Ll| \sum_{j = 1}^{n_{p-1}} P'_{ij} B_{jk} \Rr| \Rr)\Rr|^{2\al} \Rr] \le C n^{p \al }.
\end{equation*}
We therefore obtain that, for every $i \le n_0$,
\begin{equation*}  %\label{e.}
\P \Ll[ \Ll| \sum_{k = 1}^{n_p} \Ll(\Ll| \sum_{j = 1}^{n_{p-1}} P'_{ij} B_{jk} \Rr| - \E_p\Ll| \sum_{j = 1}^{n_{p-1}} P'_{ij} B_{jk} \Rr| \Rr)\Rr| \ge \ep n^{\frac{p+1}{2}} \Rr] \le C \ep^{-2\al} n^{-\al}.
\end{equation*}
By a union bound, in order to prove the theorem, it suffices to show that
\begin{equation}  
\label{e.main.reduced}
(n_1\, \cdots \, n_{p-1})^{-\frac 1 2} \max_{1 \le i \le n_0} \E_p \Ll| \sum_{j = 1}^{n_{p-1}} P'_{ij} B_{j1} \Rr|  \xrightarrow{\text{(prob.)}} \sqrt{\frac 2 \pi}.
\end{equation}
Applying Proposition~\ref{p.gaussian} with $B_j$ there replaced by 
\begin{equation*}  %\label{e.}
\frac{P'_{ij}}{\Ll(\sum_{j = 1}^{n_{p-1}} |P'_{ij}|^2\Rr)^{\frac 1 2}} B_{j1},
\end{equation*}
which has a finite $\E_p$-moment of order $2\alpha \in [2,3]$, 
we have for every $i \le n_0$ that
\begin{equation*}  %\label{e.}
\Ll| \E_p \Ll| \sum_{j = 1}^{n_{p-1}} P'_{ij} B_{j1} \Rr| - \sqrt{\frac 2 \pi} \Ll( \sum_{j = 1}^{n_{p-1}} |P_{ij}'|^2 \Rr) ^\frac 1 2\Rr| 
\le 
4 \Ll( \sum_{j = 1}^{n_{p-1}} |P_{ij}'|^2 \Rr)^{\frac 1 2-\al} \sum_{j = 1}^{n_{p-1}} |P'_{ij}|^{2\al}. 
\end{equation*}
By Proposition~\ref{p.concentration.2}, we have
\begin{equation*}  %\label{e.}
\E \Ll[ \Ll| \sum_{j = 1}^{n_{p-1}} \Ll(|P'_{ij}|^2 - \E [|P'_{ij}|^2] \Rr)\Rr| ^{2\al}  \Rr] 
\le C n^{2(p-2)\al+\al},
\end{equation*}
so for every $\ep > 0$,
\begin{equation}  
\label{e.main.no.fluct}
\P \Ll[ \Ll| \sum_{j = 1}^{n_{p-1}} \Ll(|P'_{ij}|^2 - \E [|P'_{ij}|^2] \Rr)\Rr| \ge \ep n^{p-1} \Rr] \le C \ep^{-2\al} n^{-\al}.
\end{equation}
Recalling also from Proposition~\ref{p.moment.entries} that 
\begin{equation*}  %\label{e.}
\E[|P_{ij}'|^2] = n_1\, \cdots \, n_{p-2},
\end{equation*}
we obtain \eqref{e.main.reduced} provided that we can assert that, for every $\ep > 0$,
\begin{equation}
\label{e.main.error.to.control}
\P \Ll[ \Ll( \sum_{j = 1}^{n_{p-1}} |P_{ij}'|^2 \Rr)^{\frac 1 2-\al} \sum_{j = 1}^{n_{p-1}} |P'_{ij}|^{2\al} \ge \ep n^{\frac{p-1}{2}} \Rr] = o \Ll( n^{-1} \Rr) .
\end{equation}
Using \eqref{e.main.no.fluct}, we see that it suffices to show that, for every $\ep > 0$,
\begin{equation}  
\label{e.main.end}
\P \Ll[  \sum_{j = 1}^{n_{p-1}} |P'_{ij}|^{2\al} \ge \ep n^{(p-1)\al} \Rr] = o(n^{-1}).
\end{equation}
By Lemma~\ref{l.fluct.alpha}, we have
\begin{equation*}  %\label{e.}
\E \Ll[ \Ll( \sum_{j = 1}^{n_{p-1}} \Ll(|P'_{ij}|^{2\al} - C n^{(p-2)\al}\Rr)\Rr)_+^2 \Rr] \le C n^{2(p-2)\al + 1},
\end{equation*}
so by Chebyshev's inequality,
\begin{equation*}  %\label{e.}
\P \Ll[ \sum_{j = 1}^{n_{p-1}} \Ll(|P'_{ij}|^{2\al} - C n^{(p-2)\al}\Rr) \ge \ep n^{(p-1)\al} \Rr] \le C \ep^{-2} n^{1-2\al}. 
\end{equation*}
This implies \eqref{e.main.end} and thus completes the proof.
\end{proof}
We now show by way of example that, in the setting of Theorem~\ref{t.centered} and for $p = 2$, the assumption that the entries of $A^{(1)}$ have a finite moment of order $4\al$ for some $\al > 1$ is essentially necessary. 
\begin{proposition}[Optimality of moment assumption]
\label{p.optim.moment}
The following holds for every $\ep > 0$ sufficiently small.  There exist two independent families $A = (A_{ij})_{i,j\le n}$, $B = (B_{jk})_{j,k \le n}$ of i.i.d.\ centered random variables, with $B_{11}$ having finite moments of every order and $A_{11}$ having a finite moment of order $4-10\eps$, such that 
\begin{equation*}  %\label{e.}
\lim_{n \to +\infty} \P \Ll[ \|AB\|_{\ell^\infty \to \ell^\infty} \le n^{\frac 3 2 + \ep} \Rr] = 0.
\end{equation*}
\end{proposition}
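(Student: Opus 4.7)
The plan is to exhibit one entry of $A$ that is so large that it alone forces the $\ell^\infty \to \ell^\infty$ norm of $AB$ to exceed $n^{3/2+\ep}$. Fix $\al := 4 - 9\ep$, which lies in $(4-10\ep, 4)$ for $\ep$ small, and let $A_{11}$ be symmetric with $\P[|A_{11}| > t] = t^{-\al}$ for $t \ge 1$; let $B_{11}$ be a standard Gaussian random variable. This choice makes $A_{11}$ centered with $\E[|A_{11}|^{4-10\ep}] < +\infty$ (since $4 - 10\ep < \al$), while $B_{11}$ has finite moments of every order.

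The first step is an extreme-value estimate for the largest entry $M_n := \max_{i,j \le n} |A_{ij}|$. Set $\ga := 2/\al - \ep^2$, so that $\al \ga < 2$ and, for $\ep$ small enough, $\ga > 1/2 + \ep$. Then
\begin{equation*}
\P[M_n \le n^\ga] = (1 - n^{-\al \ga})^{n^2} \le \exp\bigl(-n^{2 - \al \ga}\bigr) \xrightarrow[n \to \infty]{} 0.
\end{equation*}

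The second step is a conditional Gaussian calculation for the row containing the spike. Let $(I_0, J_0)$ be a measurably chosen index with $|A_{I_0, J_0}| = M_n$, and condition on the whole matrix $A$. The variables $Z_k := (AB)_{I_0,k} = \sum_j A_{I_0,j} B_{j,k}$ for $k = 1,\ldots,n$ are then, conditionally on $A$, i.i.d.\ centered Gaussian with variance $\si^2 := \sum_j A_{I_0,j}^2 \ge M_n^2$ (since one term of the sum is $A_{I_0,J_0}^2$). Hence the $\ell^1$-norm $S := \sum_k |Z_k|$ is, conditionally on $A$, a sum of $n$ i.i.d.\ half-normals with mean $n \si \sqrt{2/\pi}$ and variance $n \si^2 (1 - 2/\pi)$, so Chebyshev's inequality yields $\P[S \le \tfrac 1 2 n \si \sqrt{2/\pi} \mid A] \le C/n$ for an absolute constant $C$, uniformly in $A$.

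On the event $\{M_n > n^\ga\}$ one has $\si \ge M_n > n^\ga$, so $\tfrac 1 2 n \si \sqrt{2/\pi} > \tfrac 1 2 \sqrt{2/\pi} \, n^{1+\ga}$, which exceeds $n^{3/2+\ep}$ for $n$ large since $1 + \ga > 3/2 + \ep$. Combining this with $\|AB\|_{\ell^\infty \to \ell^\infty} \ge S$ and a union bound over the two failure events gives $\P[\|AB\|_{\ell^\infty \to \ell^\infty} \le n^{3/2+\ep}] \to 0$, as desired. The only conceptual subtlety is that $(I_0,J_0)$ is random, but this is handled cleanly by conditioning on the whole matrix $A$; the remaining analysis is then an extreme-value bound plus an elementary second-moment estimate for i.i.d.\ half-Gaussians.
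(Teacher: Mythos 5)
Your proof is correct and follows essentially the same strategy as the paper's: exhibit one row of $A$ containing an extreme entry of size $\gg n^{1/2+\ep}$, condition on $A$, and show that the conditional $\ell^1$ norm of the corresponding row of $AB$ — a sum of $n$ i.i.d.\ half-Gaussians with standard deviation at least the size of the spike — exceeds $n^{3/2+\ep}$ with conditional probability tending to one. The only methodological difference is in how you control the half-Gaussian sum: you use a direct second-moment/Chebyshev bound giving a conditional failure probability $O(1/n)$, whereas the paper truncates each half-Gaussian at a fixed multiple of its standard deviation and uses a Chernoff-type bound for Bernoullis, giving an exponentially small failure probability. Both suffice here since only a single row is examined. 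One small point in your favor: your explicit choice $\ga = 2/\al - \ep^2$, verified to satisfy $1+\ga > 3/2+\ep$ for small $\ep$, cleanly absorbs the multiplicative constant $\tfrac 1 2\sqrt{2/\pi}$; the paper's threshold $n^{1/2+\ep}$ only gives $\|AB\|_{\ell^\infty\to\ell^\infty}\gtrsim n^{3/2+\ep}$ up to a constant factor at first reading, and one needs to observe (as your choice of $\ga$ makes explicit) that the extremal entry in fact exceeds $n^{1/2+\ep'}$ for some $\ep'>\ep$ to conclude.
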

\begin{proof}
We fix the distribution of $A_{11}$ in such a way that for every $x \ge 1$,
\begin{equation*}  %\label{e.}
\P \Ll[ |A_{11}| \ge x \Rr] = x^{-4+9\ep}.
\end{equation*}
This random variable has a finite moment of order $4-10\ep$.
We also choose the distribution of $B_{11}$ to be Gaussian with unit variance. We observe that 
\begin{equation*}  %\label{e.}
\P[\exists i,j \le n \ : \ |A_{ij}| \ge n^{\frac 1 2 + \ep}] = 1-\Ll( 1-n^{-(\frac 1 2 + \ep)(4-9\ep)} \Rr)^{n^2} .
\end{equation*}
For $\ep > 0$ sufficiently small, the quantity above tends to $1$ as $n$ tends to infinity.  Let $I,J \le n$ be $A$-measurable random indices such that 
\begin{equation*}  %\label{e.}
A_{IJ} = \max_{i,j \le n} A_{ij}.
\end{equation*}
Conditionally on $A$, the random variables 
\begin{equation*}  %\label{e.}
\Ll( \sum_{j = 1}^n A_{Ij} B_{jk} \Rr) _{k \le n}
\end{equation*}
are independent Gaussians with the same variance $\sum_{j = 1}^n A_{Ij}^2 \ge A_{IJ}^2$. In particular, by taking $\delta > 0$ sufficiently small $(\de = 1/2$ would do), we can make sure that
\begin{equation*}  %\label{e.}
\P \Ll[\Ll| \sum_{j = 1}^n A_{Ij} B_{jk} \Rr| \ge   \de |A_{IJ}| \ \ \Big| \ \ A\Rr] \ge \frac 1 2.
\end{equation*}
By independence, we deduce (using for instance \cite[(2.18)]{mountford2013lyapunov}) that
\begin{equation*}  %\label{e.}
\P \Ll[\sum_{k = 1}^n\Ll| \sum_{j = 1}^n A_{Ij} B_{jk} \Rr| \ge  \frac{n \de}{4} |A_{IJ}| \ \ \Big| \ \ A\Rr] \ge 1-e^{-\frac{n}{16}}.
\end{equation*}
Since $|A_{IJ}| \ge n^{\frac 1 2 + \ep}$ with probability tending to $1$, this completes the proof. 
\end{proof}

%
%
%
%%%%%%%%%%%%%%%%%%%%%%%%%%%%
%%%%%%%%%%%%%%%%%%%%%%%%%%%%
%
%
%
\section{Products that start and end with 1}
\label{s.sum}

In this section, we prove Theorem~\ref{t.sum}.

\begin{proof}[Proof of Theorem~\ref{t.sum}]
We only consider the case $p \ge 2$, the case $p = 1$ being a direct application of the central limit theorem. Without loss of generality, we assume that $\alpha \le \frac 3 2$. We denote by $P'$ the product of the first $p-1$ matrices, so that $P = P'A^{(p)}$, and write $\E_p$ for the expectation with respect to $A^{(p)}$. We impose the constraint \eqref{e.fix.magnitudes} throughout. Denoting by $\mcl N$ a standard Gaussian random variable, we learn from Proposition~\ref{p.gaussian} that for every $1$-Lipschitz function $h : \R \to \R$, we have
\begin{multline*}  %\label{e.}
\Ll| \E_p\Ll[h \Ll( \Ll( n_p\sum_{j= 1}^{n_{p-1}} \Ll(\sum_{i = 1}^{n_0} P'_{ij}\Rr)^2 \Rr)^{-\frac 1 2}  \sum_{i,j,k} P'_{ij} A^{(p)}_{jk}  \Rr) \Rr] - \E[h(\mcl N)]\Rr| 
\\
\le 4 n_p^{1-\al} \Ll( \sum_{j= 1}^{n_{p-1}} \Ll(\sum_{i = 1}^{n_0} P'_{ij}\Rr)^2 \Rr)^{-\al}   \sum_{j =1}^{n_{p-1}} \Ll| \sum_{i = 1}^{n_0} P'_{ij}\Rr|^{2\al} ,
\end{multline*}
where the summation over $i,j,k$ is over $i\le n_0$, $j \le n_{p-1}$, and $k \le n_p$.
The proof will be complete provided that we can show that, for some constant $C< +\infty$ which may depend on the law of the matrix entries,
\begin{equation}  
\label{e.tsum.1}
\E \Ll[ \Ll| \sum_{i = 1}^{n_0} P'_{ij}\Rr|^{2\al} \Rr] \le C (n_0\, \cdots \, n_{p-2})^{\al},
\end{equation}
as well as
\begin{equation}  
\label{e.tsum.2}
\Ll( n_0 \, \cdots \, n_{p-1} \Rr) ^{-1}  \sum_{j= 1}^{n_{p-1}} \Ll(\sum_{i = 1}^{n_0} P'_{ij}\Rr)^2 \xrightarrow[n_0 \asymp \cdots \asymp n_{p-1} \to \infty]{\text{(prob.)}} 1.
\end{equation}
These two results follow from the next two lemmas respectively. 
\end{proof}
%The next lemma will be used in the proof of Lemma~\ref{l.concentr.1A1.2} below.
\begin{lemma}
\label{l.concentr.1A1.1}
For every $\al \ge 1$, and integer $p \ge 1$, there exists a constant $C < +\infty$ such that the following holds. Let $A^{(1)} = (A^{(1)}_{ij})_{1 \le i \le n_0,1 \le j  \le n_1}, \ldots, A^{(p)} = (A^{(p)}_{ij})_{1 \le i \le n_{p-1},1 \le j \le n_p}$ be independent families of i.i.d.\ centered random variables with unit variance and a finite moment of order $2\al$, and  let 
\begin{equation*}  %\label{e.}
P := A^{(1)} \, \cdots \, A^{(p)} \in \R^{n_0 \times n_p}
\end{equation*}
denote the product of these matrices. For every $j \in \{1,\ldots, n_p\}$, we have
\begin{equation*}  %\label{e.}
\E \Ll[\Ll| \sum_{i = 1}^{n_0}  P_{ij} \Rr| ^{2\al}\Rr] \le C \E[|A^{(1)}_{11}|^{2\al}] \, \cdots \, \E[|A^{(p)}_{11}|^{2\al}] \, (n_0\, \cdots \, n_{p-1})^{\al}.
\end{equation*}
\end{lemma}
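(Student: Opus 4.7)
The plan is to proceed by induction on $p$, paralleling the structure of the proof of Proposition~\ref{p.moment.entries}. The base case $p=1$ reduces to bounding the $2\al$-th moment of a sum of $n_0$ i.i.d.\ centered random variables with variance $1$; a direct application of Rosenthal's inequality (as in the opening of the proof of Proposition~\ref{p.one.matrix}) gives the desired estimate $C n_0^\al \E[|A^{(1)}_{11}|^{2\al}]$, since the variance contribution yields $(n_0)^\al$ and the individual-moment term gives $n_0$, which is smaller.

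For the induction step, I would set $P = P' A^{(p)}$, write $S'_k := \sum_{i=1}^{n_0} P'_{ik}$, and observe the crucial identity
\begin{equation*}
\sum_{i=1}^{n_0} P_{ij} = \sum_{k=1}^{n_{p-1}} S'_k \, A^{(p)}_{kj}.
\end{equation*}
Conditional on all matrices other than $A^{(p)}$, this is a sum of $n_{p-1}$ independent centered random variables, the $k$-th having variance $|S'_k|^2 \E[|A^{(p)}_{11}|^2]$ and $2\al$-th moment $|S'_k|^{2\al} \E[|A^{(p)}_{11}|^{2\al}]$. Applying Rosenthal's inequality under $\E_p$ then taking the full expectation yields, for some constant $C$,
\begin{equation*}
\E\Ll[\Ll|\sum_{i} P_{ij}\Rr|^{2\al}\Rr] \le C \E[|A^{(p)}_{11}|^{2\al}] \sum_{k=1}^{n_{p-1}} \E[|S'_k|^{2\al}] + C \,\E\Ll[\Ll(\sum_{k=1}^{n_{p-1}} |S'_k|^2\Rr)^{\!\al}\Rr].
\end{equation*}

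For the first term, the induction hypothesis applied to each $S'_k$ gives $\E[|S'_k|^{2\al}] \le C (n_0 \cdots n_{p-2})^\al \prod_{q=1}^{p-1} \E[|A^{(q)}_{11}|^{2\al}]$; summing over $k$ picks up a factor $n_{p-1}$, which is $\leq n_{p-1}^\al$ since $\al \ge 1$, producing the target bound. For the second term, I would use the Jensen-type inequality $(\sum_k x_k)^\al \le n_{p-1}^{\al - 1} \sum_k x_k^\al$ with $x_k = |S'_k|^2$, which reduces the problem to $\sum_k \E[|S'_k|^{2\al}]$ that was already controlled, yielding a bound of order $n_{p-1}^\al (n_0 \cdots n_{p-2})^\al \prod_{q=1}^{p-1} \E[|A^{(q)}_{11}|^{2\al}]$; since unit variance and Jensen force $\E[|A^{(p)}_{11}|^{2\al}] \ge 1$, one can freely insert the missing factor to match the stated right-hand side.

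The only delicate point is making sure that the combinatorics of the $n_q$'s line up correctly at each step, in particular the conversion of a linear factor $n_{p-1}$ from Rosenthal's first-moment term into the required $n_{p-1}^\al$ via the hypothesis $\al \ge 1$, and the Jensen step for the variance-type term. No analogue of the identity~\eqref{e.var.identity} is needed here because we are content with an upper bound; this makes the induction cleaner than in Proposition~\ref{p.moment.entries}. I do not foresee any serious obstacle beyond careful bookkeeping of the exponents.
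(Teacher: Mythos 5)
Your proof is correct, but it takes a genuinely different route from the paper's. The paper peels off $A^{(1)}$ from the left, writing $P = A^{(1)} P'$ with $P' = A^{(2)}\cdots A^{(p)}$, so that the column sum $\sum_i P_{ik} = \sum_{i,j} A^{(1)}_{ij} P'_{jk}$ becomes a double sum of $n_0 n_1$ independent terms conditionally on $P'$; a single application of Rosenthal's inequality then produces a bound in terms of $\E[|P'_{jk}|^{2\al}]$, which is controlled by directly invoking Proposition~\ref{p.moment.entries} (so the paper's argument for $p\ge 2$ is not an induction within the lemma). You instead peel off $A^{(p)}$ from the right, exploiting the fact that the column-sum structure is preserved under this decomposition: $\sum_i P_{ij} = \sum_k S'_k A^{(p)}_{kj}$ with $S'_k$ a column sum of the shorter product, giving a sum of only $n_{p-1}$ independent terms conditionally on $P'$. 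This lets you close a self-contained induction on the lemma itself without appealing to Proposition~\ref{p.moment.entries}. Both your bookkeeping steps check out: the factor $n_{p-1}$ from the Rosenthal first term is absorbed into $n_{p-1}^\al$ via $\al\ge 1$, the Jensen step $(\sum_k x_k)^\al \le n_{p-1}^{\al-1}\sum_k x_k^\al$ handles the variance-type term, and the insertion of the missing factor $\E[|A^{(p)}_{11}|^{2\al}]\ge 1$ is justified by unit variance and Jensen. The trade-off is that the paper's route is shorter given that Proposition~\ref{p.moment.entries} has already been established, while yours makes the lemma logically independent of that proposition.
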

\begin{proof}
For $p = 1$, the result follows directly from Rosenthal's inequality. We now consider the case $p \ge 2$. We use the notation $\E_1$ to denote the expectation with respect to $A^{(1)}$ only, and  denote by $P'$ the product of the $p-1$ last matrices, so that $P = A^{(1)} P'$. For each $k \in \{1,\ldots, n_p\}$, Rosenthal's inequality gives us that
\begin{align*}  %\label{e.}
\E_1 \Ll[\Ll| \sum_{i = 1}^{n_0}  P_{ik} \Rr| ^{2\al}\Rr] 
& = \E_1 \Ll[\Ll| \sum_{i = 1}^{n_0} \sum_{j = 1}^{n_1}  A^{(1)}_{ij} P'_{jk} \Rr| ^{2\al}\Rr]
\\
& \le C \sum_{i = 1}^{n_0} \sum_{j = 1}^{n_1} |P'_{jk}|^{2\al}  \E \Ll[ |A^{(1)}_{ij}| ^{2\al} \Rr]
+ C \Ll( \sum_{i = 1}^{n_0}  \sum_{j = 1}^{n_1} |P'_{jk}|^2 \E \Ll[| A^{(1)}_{ij}|^{2} \Rr] \Rr) ^{\al}
\\
& \le C (n_0 n_1)^{\al - 1} \sum_{i = 1}^{n_0} \sum_{j = 1}^{n_1}  |P'_{jk}|^\al \E \Ll[|A^{(1)}_{ij}| ^{2\al} \Rr],
\end{align*}
where we used Jensen's inequality in the last step. Taking the expectation with respect to all randomness and using the moment estimate from Proposition~\ref{p.moment.entries}, we obtain the announced result. 
\end{proof}

\begin{lemma}
\label{l.concentr.1A1.2}
Under the same assumptions as in Lemma~\ref{l.concentr.1A1.1}, and assuming also that $\al \le 2$, there exists a constant $C < +\infty$ depending only on $p$ and $\al$ such that
\begin{multline*}  %\label{e.}
\E \Ll[ \Ll| \sum_{j= 1}^{n_p} \Ll(\sum_{i = 1}^{n_0} P_{ij}\Rr)^2 -n_0 \, \cdots \,  n_p  \Rr|^\al  \Rr] 
\\
\le C (n_0 \, \cdots \,  n_p)^\al \Ll( n_1^{1-\al} + \cdots + n_p^{1-\al} \Rr)  \E[|A^{(1)}_{11}|^{2\al}] \, \cdots \, \E[|A^{(p)}_{11}|^{2\al}].
\end{multline*}
\end{lemma}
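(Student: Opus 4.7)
The plan is to proceed by induction on $p$. For convenience, set $R^{(0)} := \1_{1,n_0}$, define recursively $R^{(q)} := R^{(q-1)} A^{(q)} \in \R^{1\times n_q}$ for $q = 1, \ldots, p$, and write $Y_q := \sum_{j = 1}^{n_q} (R^{(q)}_j)^2$ and $M_q := n_0 \, \cdots \, n_q$. Since $R^{(p)}_j = \sum_{i = 1}^{n_0} P_{ij}$, the goal is to bound $\E[|Y_p - M_p|^\al]$; a direct expansion using independence and unit variance of the matrix entries yields $\E[Y_q] = M_q$ at every level.

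For the base case $p = 1$, decompose
\begin{equation*}
Y_1 - M_1 = \sum_{i,j}\big( (A^{(1)}_{ij})^2 - 1 \big) + \sum_{j} \sum_{i \neq i'} A^{(1)}_{ij} A^{(1)}_{i'j}
\end{equation*}
into its diagonal and off-diagonal pieces. The von Bahr-Esseen inequality (applicable because $\al \in [1,2]$) controls the $L^\al$ norm of the first term, while the second term has an easily computed second moment, which Jensen's inequality then converts into the desired $L^\al$ bound.

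For the inductive step $p \ge 2$, write
\begin{equation*}
Y_p - M_p = \big(Y_p - n_p Y_{p-1}\big) + n_p \big(Y_{p-1} - M_{p-1}\big).
\end{equation*}
The second summand is handled by the inductive hypothesis, using that $n_p^\al M_{p-1}^\al = M_p^\al$. For the first summand, condition on $A^{(1)}, \ldots, A^{(p-1)}$: since $R^{(p)}_j = \sum_i R^{(p-1)}_i A^{(p)}_{ij}$, expanding the square and separating diagonal from off-diagonal contributions produces $Y_p - n_p Y_{p-1} = T_1 + T_2$, where
\begin{align*}
T_1 &:= \sum_{i = 1}^{n_{p-1}} (R^{(p-1)}_i)^2 \bigg( \sum_{j = 1}^{n_p} \big( (A^{(p)}_{ij})^2 - 1 \big) \bigg),
\\
T_2 &:= \sum_{j = 1}^{n_p} \sum_{i \neq i'} R^{(p-1)}_i R^{(p-1)}_{i'} A^{(p)}_{ij} A^{(p)}_{i'j}.
\end{align*}
Conditional on $R^{(p-1)}$, the term $T_1$ is a sum over $i$ of independent centered random variables, so two successive applications of the von Bahr-Esseen inequality yield $\E_p[|T_1|^\al] \le C n_p \E[|A^{(p)}_{11}|^{2\al}] \sum_i (R^{(p-1)}_i)^{2\al}$; taking full expectation and invoking Lemma~\ref{l.concentr.1A1.1} for the product of the first $p-1$ matrices produces a bound of order $n_p^{1-\al} n_{p-1}^{1-\al} M_p^\al$ times the moment factors, which is absorbed into the target. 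For $T_2$, a direct computation of the conditional second moment (using that the entries of $A^{(p)}$ are centered with unit variance) gives $\E_p[T_2^2] \le 2 n_p Y_{p-1}^2$, so Jensen's inequality yields $\E_p[|T_2|^\al] \le C n_p^{\al/2} Y_{p-1}^\al$; taking full expectation and using the inductive hypothesis together with the triangle inequality to bound $\E[Y_{p-1}^\al]$ by a constant multiple of $M_{p-1}^\al$ closes the estimate.

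The main obstacle is the off-diagonal term $T_2$, whose bound scales like $n_p^{-\al/2} M_p^\al$; this fits within the target contribution $M_p^\al n_p^{1-\al}$ exactly when $\al \le 2$, which is precisely the hypothesis imposed in the lemma. For larger $\al$, the simple passage from $L^2$ to $L^\al$ via Jensen's inequality would no longer suffice, and one would need sharper control on the higher moments of a quadratic form with vanishing diagonal.
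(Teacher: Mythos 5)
Your proof is correct and shares the same overall architecture as the paper's (induction on $p$, the decomposition $Y_p - M_p = (Y_p - n_p Y_{p-1}) + n_p(Y_{p-1} - M_{p-1})$, the inductive hypothesis absorbing the second summand), but you handle the first summand $Y_p - n_p Y_{p-1}$ by a genuinely different route. The paper observes that, conditionally on $A^{(1)},\ldots,A^{(p-1)}$, the variables $\bigl(\sum_i P_{ik}\bigr)_{k\le n_p}$ are independent with conditional variance $Y_{p-1}$, and applies the von Bahr--Esseen inequality once, over the column index $k$; the inner estimate $\E_p\bigl[\bigl|(\sum_i P_{ik})^2 - Y_{p-1}\bigr|^\al\bigr] \le 2\E_p\bigl[|\sum_i P_{ik}|^{2\al}\bigr]$ (Jensen) then hands everything to Lemma~\ref{l.concentr.1A1.1}, giving the whole bound in one step. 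You instead expand the square $(R^{(p)}_j)^2 = \bigl(\sum_i R^{(p-1)}_i A^{(p)}_{ij}\bigr)^2$ into diagonal and off-diagonal parts, treat the diagonal $T_1$ via two nested applications of von Bahr--Esseen plus Lemma~\ref{l.concentr.1A1.1}, and treat the off-diagonal chaos $T_2$ via a direct conditional second-moment computation followed by Jensen and the inductive hypothesis. Both arguments need $\al\in[1,2]$; the paper gets it from the applicability of von Bahr--Esseen, while you additionally isolate a scaling constraint in $T_2$ (the Jensen passage from $L^2$ to $L^\al$ gives $n_p^{-\al/2}M_p^\al$, which only fits inside $n_p^{1-\al}M_p^\al$ when $\al\le 2$). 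Your decomposition is more verbose but pinpoints precisely where the off-diagonal chaos term is the bottleneck, which is a useful observation; the paper's version is more compact because it never decorrelates the quadratic form, relying instead on conditional independence across $k$. One small clean-up: your $T_2$ bound $\E[|T_2|^\al]\le C n_p^{\al/2}\E[Y_{p-1}^\al]$ requires knowing $\E[Y_{p-1}^\al]\le C M_{p-1}^\al$ (up to moment factors), which you correctly obtain from the triangle inequality and the inductive hypothesis; you should state explicitly that this uses $n_q^{1-\al}\le 1$ so the inductive error is dominated by $M_{p-1}^\al$.
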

\begin{proof}
We prove the statement by induction on $p$. 
In the case $p = 1$, the von Bahr-Esseen inequality gives us that
\begin{align*}  %\label{e.}
\E \Ll[ \Ll| \sum_{j= 1}^{n_1} \Ll(\sum_{i = 1}^{n_0} P_{ij}\Rr)^2 - \sum_{j = 1}^{n_1} \sum_{i = 1}^{n_0} \E\Ll[|P_{ij}|^2 \Rr]\Rr|^\al  \Rr]
& \le 2 \sum_{j = 1}^{n_1} \E \Ll[\Ll| \Ll( \sum_{i = 1}^{n_0} P_{ij} \Rr) ^2 - \sum_{i = 1}^{n_0} \E[P_{ij}^2] \Rr|^\al \Rr] 
\\
& \le 4 \sum_{j = 1}^{n_1} \E \Ll[ \Ll| \sum_{i = 1}^{n_0} P_{ij} \Rr| ^{2\al} \Rr] 
\\
& \le C n_0^\al n_1 \E \Ll[ |P_{11}|^{2\al} \Rr] ,
\end{align*}
where we also used Lemma~\ref{l.concentr.1A1.1} (i.e.\ Rosenthal's inequality) in the last step. Assuming from now on that $p \ge 2$, we use the same notation as in the proof of Theorem~\ref{t.centered} to write $B = A^{(p)}$ and $P'$ the product of the first $p-1$ matrices, so that $P = P' B$, and we write $\E_p$ to denote the expectation with respect to $B = A^{(p)}$ only. By the von Bahr-Esseen inequality,
\begin{multline*}  %\label{e.}
\E_p \Ll[ \Ll| \sum_{k= 1}^{n_p} \Ll(\sum_{i,j} P'_{ij}B_{jk}\Rr)^2 - n_p \E_p \Ll[ \Ll(\sum_{i,j} P'_{ij}B_{jk}\Rr)^2  \Rr] \Rr|^\al  \Rr] 
\\ \le 2 \sum_{k = 1}^{n_p}\E_p \Ll[ \Ll| \Ll(\sum_{i,j} P'_{ij}B_{jk}\Rr)^2 -  \E_p \Ll[ \Ll(\sum_{i,j} P'_{ij}B_{jk}\Rr)^2  \Rr] \Rr|^\al  \Rr] ,
\end{multline*}
where the summation indices $i,j$ range over $i \le n_0$ and $j \le n_{p-1}$. Recalling that $\sum_{j} P'_{ij} B_{jk} = P_{ik}$, we can appeal to  Lemma~\ref{l.concentr.1A1.1}  to obtain that 
\begin{multline*}  %\label{e.}
\E \Ll[ \Ll| \sum_{k= 1}^{n_p} \Ll( \sum_{i = 1}^{n_0} P_{ij} \Rr) ^2 - n_p  \sum_{j=1}^{n_{p-1}} \Ll(\sum_{i = 1}^{n_0} P'_{ij}\Rr)^2   \Rr|^\al  \Rr] 
\\
 \le C (n_0 \, \cdots \, n_{p-1})^\al n_p \E[|A^{(1)}_{11}|^{2\al}] \, \cdots \, \E[|A^{(p)}_{11}|^{2\al}] .
\end{multline*}
The induction hypothesis then allows us to conclude.
\end{proof}

%
%
%
%%%%%%%%%%%%%%%%%%%%%%%%%%%%
%%%%%%%%%%%%%%%%%%%%%%%%%%%%
%
%
%
\section{Products starting with 1}
\label{s.1P}

In this section, we focus on the proof of Theorem~\ref{t.1P}, which concerns products of random matrices with centered entries preceded by a $\1$-type matrix. We start by considering the case in which a single random matrix is involved. 

\begin{proposition}[The case of $1A$]
\label{p.1A}
Let $A^{(1)} = (A^{(1)}_{ij})_{1 \le i \le n_0,1 \le j  \le n_1}$ be a family of i.i.d.\ centered random variables with unit variance. We have
\begin{equation*}  %\label{e.}
n_1^{-1} n_0^{-\frac 1 2} \|\1_{m,n_0} A\|_{\ell^\infty \to \ell^\infty} \xrightarrow[n_0 \asymp n_1 \to \infty]{\text{(prob.)}} \sqrt{\frac 2 \pi} .%\E \Ll[ |A_{11}|^2 \Rr] .
\end{equation*}
\end{proposition}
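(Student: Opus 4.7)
The plan is to exploit the very special structure of a matrix whose rows are all identical. Writing \(S_j := \sum_{i=1}^{n_0} A_{ij}\) for the column sums of \(A\), the matrix \(\1_{m,n_0} A\) has every row equal to the vector \((S_1,\ldots,S_{n_1})\). By Remark~\ref{r.triv} and the formula \eqref{e.def.norm}, the operator norm collapses to a single scalar quantity,
\[
  \|\1_{m,n_0} A\|_{\ell^\infty \to \ell^\infty} \;=\; \sum_{j=1}^{n_1} |S_j|.
\]
The \(S_j\) are i.i.d., centered, and of variance \(n_0\), so the task reduces to showing that \(n_1^{-1} n_0^{-1/2} \sum_{j=1}^{n_1} |S_j|\) converges to \(\sqrt{2/\pi}\) in probability.

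I would handle this via a two-step argument. First, by the classical central limit theorem, \(n_0^{-1/2} S_1\) converges in law to a standard Gaussian \(\mcl N\), and the family \((n_0^{-1/2} S_1)_{n_0 \ge 1}\) is bounded in \(L^2\), hence uniformly integrable. Consequently \(n_0^{-1/2} \E|S_1| \to \E|\mcl N| = \sqrt{2/\pi}\) as \(n_0 \to \infty\). Second, by the independence of \((|S_j|)_{j \le n_1}\),
\[
  \mathrm{Var}\Ll( n_1^{-1} n_0^{-1/2} \sum_{j=1}^{n_1} |S_j| \Rr)
  \;=\; \frac{\mathrm{Var}(|S_1|)}{n_1 n_0}
  \;\le\; \frac{\E[S_1^2]}{n_1 n_0}
  \;=\; \frac{1}{n_1},
\]
so Chebyshev's inequality shows that this random variable lies within any fixed \(\varepsilon > 0\) of its mean with probability tending to \(1\) as \(n_1 \to \infty\). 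Combining the two steps yields the advertised convergence under the regime \(n_0 \asymp n_1 \to \infty\).

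There is essentially no substantive obstacle. Compared to the multi-matrix setting, which will require the quantitative bound of Proposition~\ref{p.gaussian}, the moment estimates of Section~\ref{s.centered}, and fairly delicate concentration arguments, the \(\1 A\) case degenerates to a one-dimensional law of large numbers applied to i.i.d.\ absolute column sums. The only point worth noting is that no moment assumption beyond the unit variance hypothesis stated in the proposition is needed: convergence of the first absolute moment follows from uniform integrability alone, while the variance bound above uses only the second moment.
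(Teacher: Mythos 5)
Your proof is correct and takes essentially the same route as the paper: reduce to the sum $\sum_j |S_j|$ of i.i.d.\ absolute column sums, control the fluctuations by a variance bound, and identify the limit of the mean via the central limit theorem together with uniform integrability (which is the ``truncation argument'' the paper alludes to). The only cosmetic difference is that for the variance step you exploit the independence of the $|S_j|$ directly, whereas the paper invokes the more general Lemma~\ref{l.concentration}; both yield the same $O(n_0 n_1)$ bound, and your version is the more elementary one in this special case.
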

\begin{proof}
We start by noting that
\begin{equation*}  %\label{e.}
\|1A\|_{\ell^\infty \to \ell^\infty} = \sum_{j = 1}^{n_1} \Ll| \sum_{i = 1}^{n_0} A_{ij} \Rr| .
\end{equation*}
Lemma~\ref{l.concentration} gives us that
\begin{equation*}  %\label{e.}
\E \Ll[ \Ll| \|1A\|_{\ell^\infty \to \ell^\infty}  - \E [\|1A\|_{\ell^\infty \to \ell^\infty} ] \Rr| ^{2} \Rr] \le C n_0 n_1.
\end{equation*}
By Chebyshev's inequality, it therefore suffices to show that 
\begin{equation*}  %\label{e.}
n_0^{-\frac 1 2} \E \Ll|\sum_{i = 1}^{n_0} A_{ij} \Rr| \xrightarrow[n_1 \to +\infty]{} \sqrt{\frac 2 \pi}.
\end{equation*}
This follows from the central limit theorem and a truncation argument. (If a moment of order higher than $2$ is assumed, this also follows directly from Proposition~\ref{p.gaussian}.)
\end{proof}

We now turn to the proof of Theorem~\ref{t.1P} per se. 

\begin{proof}[Proof of Theorem~\ref{t.1P}] The case $p = 1$ involving a single random matrix is covered by Proposition~\ref{p.1A}, so we assume $p \ge 2$ from now on.
We denote by $\E_p$ the expectation with respect to $A^{(p)}$ only, keeping all the other random variables fixed. We write $P'$ to denote the product of the first $p-1$ matrices, and use the shorthand notation $B := A^{(p)}$, so that $P = P'B$ and
\begin{equation*}  %\label{e.}
\|1P\|_{\ell^\infty \to \ell^\infty} = \sum_{k = 1}^{n_p} \Ll| \sum_{i = 1}^{n_0} \sum_{j = 1}^{n_{p-1}} P'_{ij} B_{jk}  \Rr| .
\end{equation*}
We also impose the constraint \eqref{e.fix.magnitudes} on the integers $n_0, \ldots, n_p$, and write $n := n_0$ for short. Within this proof, the constant $C < +\infty$ is allowed to depend on the laws of the entries of the random matrices.
Using Lemma~\ref{l.concentration} with respect to $\E_p$,  we see that
\begin{equation*}  %\label{e.}
\E_p \Ll[ \Ll|\|1P\|_{\ell^\infty \to \ell^\infty} - \E_p [\|1P\|_{\ell^\infty \to \ell^\infty}]\Rr|^2 \Rr] \le C n_p \sum_{j = 1}^{n_{p-1}}  \Ll( \sum_{i = 1}^{n_0} P'_{ij} \Rr) ^2.
\end{equation*}
Using Lemma~\ref{l.concentr.1A1.1}, we deduce that 
\begin{equation*}  %\label{e.}
\E \Ll[ \Ll|\|1P\|_{\ell^\infty \to \ell^\infty} - \E_p [\|1P\|_{\ell^\infty \to \ell^\infty}]\Rr|^2 \Rr] \le C n^{p+1}.
\end{equation*}
By Chebyshev's inequality, we can thus study the asymptotics of $\E_p [\|1P\|_{\ell^\infty \to \ell^\infty}]$ in place of $\|1P\|_{\ell^\infty \to \ell^\infty}$, and we have
\begin{equation*}  %\label{e.}
\E_p [\|1P\|_{\ell^\infty \to \ell^\infty}] = n_p \E_p \Ll| \sum_{i = 1}^{n_0} \sum_{j = 1}^{n_{p-1}} P'_{ij} B_{j1}  \Rr| .
\end{equation*}
By Proposition~\ref{p.gaussian}, we have that 
\begin{multline*}  %\label{e.}
\Ll| \Ll( \sum_{j = 1}^{n_{p-1}} \Ll(\sum_{i = 1}^{n_0} P'_{ij}\Rr)^2 \Rr)^{-\frac 1 2} \E_p \Ll| \sum_{i,j} P'_{ij} B_{j1} \Rr| - \sqrt{\frac{2}{\pi}}  \Rr| \\
\le C \Ll( \sum_{j = 1}^{n_{p-1}} \Ll(\sum_{i = 1}^{n_0} P'_{ij}\Rr)^2 \Rr)^{-\al} \sum_{j = 1}^{n_{p-1}} \Ll| \sum_{i = 1}^{n_0} P'_{ij}\Rr|^{2\al}.
\end{multline*}
The estimates \eqref{e.tsum.1} and \eqref{e.tsum.2}, themselves derived from Lemmas~\ref{l.concentr.1A1.1} and \ref{l.concentr.1A1.2} respectively, allow us to conclude. 
\end{proof}

%
%
%
%%%%%%%%%%%%%%%%%%%%%%%%%%%%
%%%%%%%%%%%%%%%%%%%%%%%%%%%%
%
%
%
\section{Products ending with 1}
\label{s.P1}

This section is focused on the proof of Theorem~\ref{t.P1}. That is, we consider products of random matrices with centered entries to which we append a $\1$-type matrix at the end. In the previous base cases, the fact that the expression for the operator norm in \eqref{e.def.norm} involves a maximum was dealt with using concentration estimates, so that the asymptotic behavior of the maximum was no different than, say, the term indexed by $i = 1$. The case studied in this section is fundamentally different, as the maximum creates a genuine change of behavior witnessed by an additional logarithmic divergence. The workhorse of the proof of Theorem~\ref{t.P1} is the following proposition.
\begin{proposition}
\label{p.A1}
Let $(B_{ij})_{i,j \ge 1}$ be a family of i.i.d.\ centered random variables with unit variance and finite fourth moment. For all real numbers $(a_{j,n})_{1 \le j \le n}$ satisfying
\begin{equation}  
\label{e.A1.ass.a}
\sum_{j = 1}^n a_{j,n}^2 = n,  \qquad  \forall j\le n, \ |a_{j,n}| \le n^{\frac 1 {20}},
\end{equation}
and 
\begin{equation}
\label{e.A1.ass.b}
\lim_{n \to \infty} \frac{1}{(\log n)^2 n}  \sum_{j = 1}^n a_{j,n}^4  = 0,
\end{equation}
we have
\begin{equation}  
\label{e.A1}
(2n \log m)^{-\frac 1 2} \max_{1 \le i \le m} \Ll| \sum_{j = 1}^{n}  B_{ij} a_{j,n} \Rr| \xrightarrow[m \asymp n \to \infty]{\text{(prob.)}} 1.  
\end{equation}
\end{proposition}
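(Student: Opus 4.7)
The plan is to exploit the mutual independence of the rows to reduce the problem to the tail behavior of a single row sum. Writing $S_i := \sum_{j=1}^{n} a_{j,n} B_{ij}$, we have $\mathrm{Var}(S_i) = n$ by \eqref{e.A1.ass.a} and
\begin{equation*}
\P\bigl(\max_{1\le i\le m} |S_i| \le t\bigr) = \P\bigl(|S_1| \le t\bigr)^m.
\end{equation*}
Hypotheses \eqref{e.A1.ass.a}--\eqref{e.A1.ass.b} are chosen so that $S_1/\sqrt{n}$ has Gaussian-like tails at the moderate-deviation scale $t \asymp \sqrt{\log m}$; matching one-sided tail estimates will then deliver the upper half of \eqref{e.A1} via a union bound, and the lower half via the elementary bound $(1-p)^m \le e^{-mp}$.

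For the upper bound I would truncate at level $K_n := n^{1/4+\delta}$ for some small fixed $\delta \in (0, 1/5)$. By Markov applied to $\E[B_{11}^4]$ and a union bound over the $mn$ entries, every entry coincides with its truncation $B'_{ij} := B_{ij}\1_{\{|B_{ij}| \le K_n\}}$ with probability $1 - O(n^{-4\delta})$, and the bias incurred on the means is at most $|\sum_j a_{j,n}| \cdot |\E[B-B']| \le n \cdot \E[|B|^4]/K_n^3 = o(\sqrt{n \log m})$. The centered truncated sum $\widetilde S_i := \sum_j a_{j,n}(B'_{ij} - \E[B'_{ij}])$ has variance at most $n$ and bounded increments of size at most $2\, n^{1/20 + 1/4 + \delta}$, so Bernstein's inequality yields
\begin{equation*}
\P\bigl(|\widetilde S_i| \ge (1 + \tfrac{\eps}{2})\sqrt{2 n \log m}\bigr) \le 2 \exp\bigl(-(1 + \tfrac{\eps}{2})^{2} \log m\,(1+o(1))\bigr),
\end{equation*}
since the Bernstein ``linear'' correction is $O(n^{-1/5 + \delta} \sqrt{\log n}) = o(1)$. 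A union bound over $i \le m$ then produces the upper half of \eqref{e.A1}.

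For the lower bound I would establish a matching moderate-deviation lower estimate of the form $\P(|S_1| \ge (1-\eps)\sqrt{2 n \log m}) \ge c\, m^{-(1-\eps/2)^{2}}$ by carrying out a Cram\'er change-of-measure analysis on $\widetilde S_1$. A cumulant expansion at tilting parameter $\lambda \asymp \sqrt{\log m / n}$ gives
\begin{equation*}
\log \E\bigl[e^{\lambda \widetilde S_1}\bigr] = \tfrac{\lambda^{2}}{2} \sum_j a_{j,n}^{2}\, \mathrm{Var}(B'_{11}) + O\Bigl(\lambda^{3} \sum_j |a_{j,n}|^{3} + \lambda^{4} \sum_j a_{j,n}^{4}\Bigr),
\end{equation*}
and combining $\sum_j |a_{j,n}|^{3} \le (\sum_j a_{j,n}^{2})^{1/2}(\sum_j a_{j,n}^{4})^{1/2}$ with \eqref{e.A1.ass.b} forces this error to be $o(1)$; an inverse Laplace analysis then produces the Gaussian tail asymptotic $\P(|S_1| \ge t\sqrt{n}) \sim 2\bar\Phi(t)$ at $t = (1-\eps)\sqrt{2 \log m}$. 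Combined with the independence of the rows, this yields
\begin{equation*}
\P\bigl(\max_{i} |S_i| < (1-\eps)\sqrt{2 n \log m}\bigr) \le \bigl(1 - c\, m^{-(1-\eps/2)^{2}}\bigr)^{m} \le \exp\bigl(-c\, m^{1 - (1-\eps/2)^{2}}\bigr) \to 0.
\end{equation*}

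The main obstacle will be this moderate-deviation lower bound: the paper's CLT workhorse Proposition~\ref{p.gaussian} produces only $W^{1}$-type errors, which are far too crude to resolve tail probabilities of size $m^{-(1-\eps)^{2}}$, so the Cram\'er/Laplace analysis genuinely has to be carried out by hand. The three hypotheses are calibrated precisely for this purpose: the fourth-moment assumption on $B_{11}$ together with \eqref{e.A1.ass.b} makes the cumulants of order $\ge 3$ contribute $o(1)$ in the expansion at scale $\lambda = O(\sqrt{\log m / n})$, and the pointwise bound $|a_{j,n}| \le n^{1/20}$ ensures that truncation at $K_n = n^{1/4 + \delta}$ removes the heavy tails almost surely without spoiling Bernstein's linear correction on the upper side.
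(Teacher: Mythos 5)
Your overall strategy is the same as the paper's: truncate the entries, use Chernoff/Bernstein to control the upper tail, and a Cram\'er change-of-measure for the matching lower tail, then exploit row independence. You also correctly identified that the Stein-type CLT in Proposition~\ref{p.gaussian} is useless at this deviation scale and that the three hypotheses are calibrated for the tilting analysis. That said, there are two genuine issues with the way you carry out the truncation step.

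The first is an arithmetic error in removing the truncation. With $K_n = n^{1/4+\delta}$ and the fourth moment assumption, the per-entry tail is $\P(|B_{ij}|>K_n) \le \E[B^4]\,K_n^{-4} = O(n^{-1-4\delta})$. A union bound over the $mn \asymp n^2$ entries then gives a failure probability of order $n^{1-4\delta}$, not $n^{-4\delta}$ as you claim; this diverges for every $\delta < 1/4$. And if you instead chose $\delta > 1/4$ to make this union bound close, the increment bound $n^{1/20+1/4+\delta}$ would make Bernstein's linear correction of order $\lambda\,n^{1/20+1/4+\delta} = n^{1/20+\delta-1/4}\sqrt{\log n}$ diverge, destroying the sharp constant. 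So the two requirements you place on $\delta$ are incompatible, and the truncation step cannot be closed by a raw union bound. The paper avoids this by never asking that \emph{all} entries fall below the threshold; instead it bounds the fourth moment of each row's tail contribution $\sum_j a_{j,n}(\hat B_{ij}-\E\hat B_{ij})$ by $C\sum_j a_{j,n}^4$ (using $\E[\hat B^2] = O(n^{-1/2})$ to kill the cross term), applies Chebyshev, and union-bounds only over the $m$ rows; hypothesis~\eqref{e.A1.ass.b} is exactly what makes $m\cdot(n\log m)^{-2}\sum_j a_{j,n}^4 \to 0$.

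The second issue is in the log-moment-generating-function expansion. You write the error as $O(\lambda^3\sum_j|a_{j,n}|^3 + \lambda^4\sum_j a_{j,n}^4)$, i.e.\ as if only the third and fourth cumulants contribute. But after truncation at level $K_n$, the fifth derivative of $\psi_n$ is only controlled as $O(K_n^5)$, so the Lagrange remainder beyond fourth order contributes a term of size $\lambda^5\,K_n^5\sum_j|a_{j,n}|^5 \le C(\log n)^{5/2}n^{-6/5+5\delta}\sum_j a_{j,n}^4$, which under~\eqref{e.A1.ass.b} is $o((\log n)^{9/2}\,n^{-1/5+5\delta})$. This tends to zero only for $\delta$ sufficiently close to $0$; in particular the paper's choice of truncation at exactly $n^{1/4}$ (i.e.\ $\delta = 0$) is forced, and is the reason the $n^{1/20}$ and $(\log n)^2$ exponents appear in~\eqref{e.A1.ass.a}--\eqref{e.A1.ass.b}. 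You should make this fifth-order remainder explicit rather than suppress it.
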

\begin{proof}
We fix $C_0 < +\infty$ and only consider choices of $m,n$ that satisfy $C_0^{-1} m \le n \le C_0 m$.
We define
\begin{equation*}  %\label{e.}
\td B_{ij} := B_{ij} \1_{\{|B_{ij}| \le n^{1/4}\}}, \qquad 
\hat B_{ij} := B_{ij} \1_{\{|B_{ij}| > n^{1/4}\}},
\end{equation*}
and 
\begin{equation*}  %\label{e.}
\ov B_{ij} := \td B_{ij} - \E \Ll[ \td B_{ij} \Rr] .
\end{equation*}
Although this is kept implicit in the notation, we stress that these random variables depend on $n$. We decompose the rest of the proof into four steps.

\medskip

\noindent \emph{Step 1.} In this step, we show that in the expression on the left side of~\eqref{e.A1}, we may as well substitute $B_{ij}$ with $\ov B_{ij}$. %To see that the expectation $\E[\td A_{ij}]$ does not contribute to the asymptotic behavior, we use that $A_{ij}$ is centered and Chebyshev's inequality to write
%\begin{equation*}  %\label{e.}
%\Ll|\E \Ll[ \td A_{ij} \Rr] \Rr| = \Ll|\E \Ll[ \hat A_{ij} \Rr] \Rr| \le \frac{\E[|A_{ij}|^4]}{n}.
%\end{equation*}
Since $B_{ij}$ is centered, we have 
\begin{equation*}  %\label{e.}
B_{ij} = \td B_{ij} + \hat B_{ij} = \td B_{ij} - \E[\td B_{ij}] + \hat B_{ij} - \E[\hat B_{ij}] = \bar B_{ij} + \hat B_{ij} - \E[\hat B_{ij}].
\end{equation*}
Moreover,
\begin{multline*}  %\label{e.}
\E \Ll[\Ll| \sum_{j = 1}^{n} a_{j,n}(\hat B_{ij} - \E[\hat B_{ij}]) \Rr|^4 \Rr] 
\\
\le \sum_{j = 1}^{n} a_{j,n}^4 \E \Ll[ |\hat B_{ij} - \E[\hat B_{ij}]|^4 \Rr] + 6 \Ll( \sum_{j = 1}^{n} a_{j,n}^2\E[|\hat B_{ij} - \E[\hat B_{ij}]|^2] \Rr) ^2.
\end{multline*}
By Chebyshev's inequality, we have
\begin{equation*}  %\label{e.}
\Ll|\E \Ll[ \hat B_{ij} \Rr] \Rr| \le \frac{\E[|B_{ij}|^4]}{n} \quad \text{ and } \quad \E\Ll[|\hat B_{ij}|^2\Rr] \le \frac{\E[|B_{ij}|^4]}{\sqrt n},
\end{equation*}
so
\begin{equation*}  %\label{e.}
\E \Ll[\Ll| \sum_{j = 1}^{n} a_{j,n}(\hat B_{ij} - \E[\hat B_{ij}]) \Rr|^4 \Rr] \le C \sum_{j = 1}^n a_{j,n}^4.
\end{equation*}
It therefore follows that, for every $\ep > 0$, 
\begin{equation*}  %\label{e.}
\P \Ll[ \Ll| \sum_{j = 1}^{n} a_{j,n} (\hat B_{ij} - \E[\hat B_{ij}]) \Rr| \ge \ep \sqrt{n \log m} \Rr] \le C (n \ep^2 \log m)^{-2} \sum_{j = 1}^n a_{j,n}^4,
\end{equation*}
and by a union bound, we obtain that 
\begin{equation*}  %\label{e.}
\P \Ll[ \max_{1 \le i \le m} \Ll| \sum_{j = 1}^{n} a_{j,n}(\hat B_{ij} - \E[\hat B_{ij}]) \Rr| \ge \ep \sqrt{n \log m} \Rr] \le C n^{-1} \ep^{-4} (\log m)^{-2} \sum_{j = 1}^n a_{j,n}^4.
\end{equation*}
In order to prove the result, it therefore suffices to show that 
\begin{equation}
\label{e.A1.goal}
(2 n \log m)^{-\frac 1 2} \max_{1 \le i \le m} \Ll| \sum_{j = 1}^{n}  \bar B_{ij} a_{j,n}  \Rr| \xrightarrow[m \asymp n \to \infty]{\text{(prob.)}} 1.
\end{equation}

\medskip

\noindent \emph{Step 2.}
We recall that the law of $\bar B_{11}$ depends on $n$. For every $\lambda \in \R$, we define
\begin{equation*}  %\label{e.}
\psi_n(\lambda) := \log \E \Ll[ e^{\lambda \bar B_{11}} \Rr] .
\end{equation*}
The goal of this step is to work out a Taylor expansion of $\psi_n$ near the origin. We denot by $(\kappa_{k,n})_{k \ge 1}$ the cumulants of $\bar B_{11}$, so that 
\begin{equation*}  %\label{e.}
\psi_n(\lambda) = \sum_{k = 1}^{+\infty} \kappa_{k,n} \frac{\lambda^k}{k!}. 
\end{equation*}
Since $B_{11}$ has finite fourth moment, for every $k \in \{1,2,3,4\}$, we can define $\kappa_k$ the $k$-th cumulant of $B_{11}$, and we have $\lim_{n \to +\infty} \kappa_{k,n} = \kappa_k$. For any bounded random variable $X$, we write
\begin{equation*}  %\label{e.}
\E_{\lambda} [X] := \frac{\E[X e^{\lambda \bar B_{11}}]}{\E[e^{\lambda \bar B_{11}}]}
\end{equation*}
Although the notation does not show it, this expectation depends on $n$. We have 
\begin{equation*}  %\label{e.}
\psi_n'(\lambda ) = \E_{\lambda}[\bar B_{11}],
\end{equation*}
and we can compute the subsequent derivatives of $\psi_n$ by induction using the observation that, for every integer $k \ge 1$,  
\begin{equation*}  %\label{e.}
\frac {\d}{\d \lambda} \E_{\lambda} [\bar B_{11}^k] = \E_{\lambda} [\bar B_{11}^{k+1}]  - \E_{\lambda} [\bar B_{11}^k]  \E_{\lambda} [\bar B_{11}].
\end{equation*}
The $k$-th derivative of $\psi_n$ is therefore a polynomial in $(\E_{\lambda} [\bar B_{11}^\ell])_{\ell \le k}$, and it is homogeneous in $\bar B_{11}$ of degree $k$. In particular, for each $k$, there exists a constant $C_k < +\infty$ such that $|\psi_n^{(k)}| \le C_k n^{\frac k 4}$. Observing also that $\kappa_{1,n} = 0$, we thus have that, for every $\lambda \in \R$, 
\begin{equation}
\label{e.A1.taylor.psi}
\Ll| \psi_n(\lambda) - \sum_{k = 2}^4 \kappa_{k,n} \frac{\lambda^k}{k!}\Rr| \le C_5 |\lambda|^5 n^{\frac 5 4}.
\end{equation}
In the sequel, we will want to make use of this expansion for $\lambda$ of the order of $\sqrt{(\log n)/n}$. Recall also that $\lim_{n \to \infty} \kappa_{2,n} =  \kappa_2 = 1$. In place of \eqref{e.A1.taylor.psi}, it will suffice for us to notice that in the limit $n \to \infty$, $\lambda \to 0$ with $|\lambda| = o(n^{-\frac 5 {12}})$, we have that 
\begin{equation}
\label{e.expand.psi}
\psi_n(\lambda) = \frac{\lambda^2}{2}(1+o(1)).
\end{equation}
%Similarly, we can perform a Taylor expansion of $\psi_n'$ and obtain that, for every $\lambda \in \R$,
%\begin{equation*}  %\label{e.}
%\Ll| \psi_n'(\lambda) - \sum_{k = 1}^3 \kappa_{k+1,n} \frac{\lambda^k}{k!} \Rr| \le C_5 |\lambda|^4 n^{\frac 5 4}.
%\end{equation*}
%so that in the same regime $n \to \infty$, $\lambda \to 0$ with $|\lambda| = o(n^{-\frac 5 {12}})$, we also have that 
%\begin{equation}
%\label{e.expand.psi'}
%\psi_n'(\lambda) = \lambda(1+o(1)).
%\end{equation}

\medskip

\noindent \emph{Step 3.} In this step, we prove the upper bound in our goal of showing \eqref{e.A1.goal}. More precisely, fixing $\ep > 0$, we show that 
\begin{equation}  
\label{e.A1.upper}
\P \Ll[ \Ll| \sum_{j = 1}^{n}a_{j,n} \bar B_{ij} \Rr| \ge (1+\ep) \sqrt{2n \log n} \Rr] = o \Ll( n^{-1} \Rr) .
\end{equation}
To show \eqref{e.A1.upper}, we use Chebyshev's inequality to write, for $\lambda \ge 0$, 
\begin{equation*}  %\label{e.}
\P \Ll[ \sum_{j = 1}^{n}a_{j,n} \bar B_{ij}  \ge (1+\ep) \sqrt{2n \log n} \Rr] \le \exp \Ll(  - \lambda (1+\ep) \sqrt{2n \log n} +\sum_{j = 1}^n \psi_n(a_{j,n} \lambda) \Rr) .
\end{equation*}
We select $\lambda = (1+\ep)\sqrt{2(\log n)/n}$ and use the expansion in \eqref{e.expand.psi} and the assumptions in \eqref{e.A1.ass.a} to obtain that 
\begin{equation*}  %\label{e.}
\P \Ll[ \sum_{j = 1}^{n} a_{j,n}\bar B_{ij}  \ge (1+\ep) \sqrt{2n \log n} \Rr] \le \exp \Ll( -(1+\ep)^2 (1+o(1))\log n \Rr) .
\end{equation*}
The same argument also applies to the estimatation of the lower deviations of the sum, and we thus obtain \eqref{e.A1.upper}.

\medskip 

\noindent \emph{Step 4.} In this step, we prove the lower bound corresponding to \eqref{e.A1.goal} and thereby complete the proof. We introduce the events $\mcl E_+$ and $\mcl E_-$ defined respectively by
\begin{equation*}  %\label{e.}
\sum_{j = 1}^{n} a_{j,n} \bar B_{ij}  \ge  \sqrt{2n \log n} \quad \text{ and }\quad \sum_{j = 1}^{n} a_{j,n} \bar B_{ij}  \le (1-2\ep) \sqrt{2n \log n} ,
\end{equation*}
as well as $\mcl E := \mcl E_+^c \cap \mcl E_-^c$ (with $E^c$ denoting the complementary of $E$). 
For $\lambda_n := (1-\ep)\sqrt{2(\log n)/n}$, we write
\begin{equation}  
\label{e.A1.lower}
\P \Ll[ \mcl E \Rr] \ge \E \Ll[ \exp \Ll(\lambda_n \sum_{j = 1}^{n} a_{j,n} \bar B_{ij} \Rr) \1_{\mcl E} \Rr] \exp \Ll( -\lambda_n  \sqrt{2n \log n} \Rr).
\end{equation}
We aim to show that, up to a multiplicative error of $1+o(1)$, we can remove the indicator function $\1_{\mcl E}$ in \eqref{e.A1.lower}. For $\mu_n := \ep \sqrt{2(\log n)/n} \ge 0$, we have
\begin{align*}  %\label{e.}
& \E \Ll[ \exp \Ll(\lambda_n \sum_{j = 1}^{n}a_{j,n}  \bar B_{ij} \Rr) \1_{\mcl E_+} \Rr] 
\\
& \qquad \le \E \Ll[ \exp \Ll((\lambda_n + \mu_n) \sum_{j = 1}^{n} a_{j,n} \bar B_{ij} \Rr)  \Rr] \exp \Ll( -\mu_n  \sqrt{2 n \log n} \Rr) 
\\
& \qquad \le \exp\Ll(- \mu_n  \sqrt{2 n \log n} + \sum_{j = 1}^n \psi_n(a_{j,n}(\lambda_n + \mu_n)) \Rr)
\\
& \qquad \le \exp \Ll( (1-2\ep)(1+o(1))\log n \Rr) .
\end{align*}
For $\mcl E_-$, a similar argument yields that
\begin{align*}  %\label{e.}
& \E \Ll[ \exp \Ll(\lambda_n \sum_{j = 1}^{n} a_{j,n} \bar B_{ij} \Rr) \1_{\mcl E_-} \Rr] 
\\
& \qquad \le \E \Ll[ \exp \Ll((\lambda_n - \mu_n) \sum_{j = 1}^{n} a_{j,n} \bar B_{ij} \Rr)  \Rr] \exp \Ll( \mu_n (1-2\ep) \sqrt{2 n \log n} \Rr) 
\\
& \qquad \le \exp\Ll(\mu_n(1-2\ep)  \sqrt{2 n \log n} + \sum_{j = 1}^n \psi_n (a_{j,n}(\lambda_n - \mu_n))\Rr)
\\
& \qquad \le \exp \Ll( (1-2\ep)(1+o(1))\log n \Rr) .
\end{align*}
Since
\begin{equation*}  %\label{e.}
\E \Ll[ \exp \Ll(\lambda_n \sum_{j = 1}^{n} a_{j,n} \bar B_{ij} \Rr) \Rr] = \exp \Ll(\sum_{j = 1}^n \psi_n(a_{j,n} \lambda) \Rr) = \exp \Ll( (1-\ep)^2(1+o(1))\log n \Rr) ,
\end{equation*}
we can combine these estimates with \eqref{e.A1.lower} to obtain that 
\begin{align*}  %\label{e.}
\P[\mcl E] & \ge  \exp \Ll((1-\ep)^2 (1+o(1))\log n  - \lambda_n \sqrt{2 n \log n}\Rr) \\
& \ge \exp \Ll( -(1-\ep^2) (1+o(1))\log n  \Rr) .
\end{align*}
To sum up, we have obtained that $n \P[\mcl E]$ diverges to infinity like a small power of $n$. By independence, we also have that 
\begin{equation*}  %\label{e.}
\P \Ll[ \max_{1 \le i \le m} \sum_{j = 1}^{n} a_{j,n} \bar B_{ij} > (1-2\ep) \sqrt{2n \log n} \Rr] \ge 1-(1-\P[\mcl E])^n,
\end{equation*}
and thus this probability tends to $1$ as $n$ tends to infinity, as desired.
\end{proof}

\begin{remark}  
\label{r.random.coefs}
To further prepare the ground for the proof of Theorem~\ref{t.P1}, we note that the following strengthening of Proposition~\ref{p.A1} is also valid. With the same assumptions on $B$ as in Proposition~\ref{p.A1}, suppose now that the coefficients $(a_{j,n})$ are random, independent of $B$, and satisfy
\begin{equation*}  %\label{e.}
\sum_{j = 1}^n a_{j,n}^2 = n  \quad \text{a.s.}, 
\end{equation*}
\begin{equation*}  %\label{e.}
\P\Ll[2n^{-\frac 1 {20}} \max_{j \le n} |a_{j,n}|  \le 1\Rr] \xrightarrow[n \to \infty]{} 1,
\end{equation*}
and
\begin{equation*}  %\label{e.}
\frac{1}{(\log n)^2 n}  \sum_{j = 1}^n a_{j,n}^4  \xrightarrow[n \to \infty]{\text{(prob.)}} 0.
\end{equation*}
We claim that under these assumptions, the conclusion \eqref{e.A1} of Proposition~\ref{p.A1} still holds. One possible way to see this is to simply inspect the proof given above. Alternatively, one can appeal to Skorokhod's representation theorem and build a new probability space for the variables $(a_{j,n})$ under which the condition \eqref{e.A1.ass.a} holds almost surely for $n$ sufficiently large, and the condition \eqref{e.A1.ass.b} holds almost surely. With this construction, we can appeal to Proposition~\ref{p.A1} as stated to conclude for the stronger result. 
\end{remark}
\begin{proof}[Proof of Theorem~\ref{t.P1}]
We have
\begin{equation}  
\label{e.pf.P1}
m^{-1} \|P \1_{n_p,m} \|_{\ell^\infty \to \ell^\infty} = \max_{1 \le i \le n_0} \Ll| \sum_{j = 1}^{n_p} P_{ij} \Rr| .
\end{equation}
We argue by induction on $p$. 
In the case $p = 1$, a direct application of Proposition~\ref{p.A1} allows us to obtain the announced result. From now on, we assume that $p \ge 2$, let $P'$ denote the product of the $p-1$ last matrices, and write $\P_1$, $\E_1$ to denote respectively the probability and the expectation with respect to $A^{(1)}$ only, keeping the other variables fixed. We also use the shorthand notation $B := A^{(1)}$, so that $P = B P'$. We also impose the constraint \eqref{e.fix.magnitudes} on the integers $n_0, \ldots, n_p$. The quantity on the right side of \eqref{e.pf.P1} can be rewritten as
\begin{equation} 
\label{e.P1.goal}
\max_{1 \le i \le n_0} \Ll| \sum_{j = 1}^{n_1} \sum_{k = 1}^{n_p} B_{ij} P'_{jk} \Rr| .
\end{equation}
Applying Lemmas~\ref{l.concentr.1A1.2} and \ref{l.concentr.1A1.1} to the transpose of $P'$ yield, respectively, that 
\begin{equation*}  %\label{e.}
(n_1 \, \cdots \, n_p)^{-1} \sum_{j = 1}^{n_1} \Ll| \sum_{k = 1}^{n_p} P'_{jk} \Rr| ^2 \xrightarrow[n_1 \asymp \cdots \asymp n_p \to \infty]{\text{(prob.)}}1
\end{equation*}
and
\begin{equation*}  %\label{e.}
  \E \Ll[\sum_{j = 1}^{n_1}\Ll| \sum_{k = 1}^{n_p} P'_{jk} \Rr| ^4 \Rr] \le  C n_1 (n_2 \, \cdots \, n_p)^{2}.
\end{equation*}
By the induction hypothesis, the event that
\begin{equation*}  %\label{e.}
\max_{1 \le j \le n_1} \Ll| \sum_{k = 1}^{n_p} P_{jk}'\Rr| \le 2 (n_2 \, \cdots \, n_p \log n_1)^\frac 1 2
\end{equation*}
has probability tending to $1$ as $n_1 \asymp \cdots \asymp n_p \to \infty$. In view also of Remark~\ref{r.random.coefs}, we are thus in a position to appeal to Proposition~\ref{p.A1} and conclude for the desired result. 
\end{proof}

\medskip

\noindent \textbf{Acknowledgements.} I would like to thank Charles Bordenave, Thomas Buc-d'Alché, Guillaume Dubach, Cédric Gerbelot and Rémi Gribonval for stimulating and helpful discussions. I acknowledge the support of the ANR-24-RRII-0002 grant operated by the Inria Quadrant program and part of the France 2030 plan.

\small
\bibliographystyle{plain}
\bibliography{matrix_norms}

\newcommand{\noop}[1]{} \def\cprime{$'$}
\begin{thebibliography}{10}

\bibitem{adamczak2024norms}
Radosław Adamczak, Joscha Prochno, Marta Strzelecka, and Michał Strzelecki.
\newblock Norms of structured random matrices.
\newblock {\em Math. Ann.}, 388(4):3463--3527, 2024.

\bibitem{alexeev2010asymptotic}
Nikita Alexeev, Friedrich Götze, and Alexander Tikhomirov.
\newblock On the asymptotic distribution of singular values of products of
  large rectangular random matrices.
\newblock {Preprint, arXiv:1012.2586}.

\bibitem{bai1986limiting}
Z.~D. Bai and Y.~Q. Yin.
\newblock Limiting behavior of the norm of products of random matrices and two
  problems of {G}eman-{H}wang.
\newblock {\em Probab. Theory Related Fields}, 73(4):555--569, 1986.

\bibitem{bartlett2017spectrally}
Peter~L Bartlett, Dylan~J Foster, and Matus~J Telgarsky.
\newblock Spectrally-normalized margin bounds for neural networks.
\newblock {\em Advances in neural information processing systems}, 30, 2017.

\bibitem{bennett1977schur}
Grahame Bennett.
\newblock Schur multipliers.
\newblock {\em Duke Math. J.}, 44(3):603--639, 1977.

\bibitem{bhowmick2021lipbab}
Aritra Bhowmick, Meenakshi D’Souza, and G~Srinivasa Raghavan.
\newblock {LipBaB: computing exact Lipschitz constant of ReLU networks}.
\newblock In {\em Artificial Neural Networks and Machine Learning--ICANN 2021:
  30th International Conference on Artificial Neural Networks, Bratislava,
  Slovakia, September 14--17, 2021, Proceedings, Part IV 30}, pages 151--162.
  Springer, 2021.

\bibitem{bikjalis}
A.~Bikjalis.
\newblock Estimates of the remainder term in the central limit theorem.
\newblock {\em Litovsk. Mat. Sb.}, 6:323--346, 1966.

\bibitem{boucheron2013concentration}
St\'ephane Boucheron, G\'abor Lugosi, and Pascal Massart.
\newblock {\em Concentration inequalities}.
\newblock Oxford University Press, Oxford, 2013.

\bibitem{carl1978grenz}
Bernd Carl, Bernard Maurey, and Joachim Puhl.
\newblock Grenzordnungen von absolut-{$(r,\,p)$}-summierenden {O}peratoren.
\newblock {\em Math. Nachr.}, 82:205--218, 1978.

\bibitem{chen2011normal}
Louis H.~Y. Chen, Larry Goldstein, and Qi-Man Shao.
\newblock {\em Normal approximation by {S}tein's method}.
\newblock Probability and its Applications (New York). Springer, Heidelberg,
  2011.

\bibitem{dubach2021words}
Guillaume Dubach and Yuval Peled.
\newblock On words of non-{H}ermitian random matrices.
\newblock {\em Ann. Probab.}, 49(4):1886--1916, 2021.

\bibitem{golowich2018size}
Noah Golowich, Alexander Rakhlin, and Ohad Shamir.
\newblock Size-independent sample complexity of neural networks.
\newblock In {\em Proceedings of the 31st Conference On Learning Theory},
  volume~75 of {\em Proceedings of Machine Learning Research}, pages 297--299,
  06--09 Jul 2018.

\bibitem{gonon2023approximation}
Antoine Gonon, Nicolas Brisebarre, R{\'e}mi Gribonval, and Elisa Riccietti.
\newblock Approximation speed of quantized versus unquantized {ReLU} neural
  networks and beyond.
\newblock {\em IEEE Transactions on Information Theory}, 69(6):3960--3977,
  2023.

\bibitem{gonon2023path}
Antoine Gonon, Nicolas Brisebarre, Elisa Riccietti, and Rémi Gribonval.
\newblock A path-norm toolkit for modern networks: consequences, promises and
  challenges.
\newblock {Preprint, arXiv:2310.01225}.

\bibitem{goodfellow2014explaining}
Ian~J. Goodfellow, Jonathon Shlens, and Christian Szegedy.
\newblock Explaining and harnessing adversarial examples.
\newblock {Preprint, arXiv:1412.6572}.

\bibitem{guedon2017expectation}
Olivier Gu\'edon, Aicke Hinrichs, Alexander~E. Litvak, and Joscha Prochno.
\newblock On the expectation of operator norms of random matrices.
\newblock In {\em Geometric aspects of functional analysis}, volume 2169 of
  {\em Lecture Notes in Math.}, pages 151--162. Springer, Cham, 2017.

\bibitem{herrera2020local}
Calypso Herrera, Florian Krach, and Josef Teichmann.
\newblock Local {L}ipschitz bounds of deep neural networks.
\newblock {Preprint, arXiv:2004.13135}.

\bibitem{latala2023chevet}
Rafa\l{} Lata\l{}a and Marta Strzelecka.
\newblock Chevet-type inequalities for subexponential {W}eibull variables and
  estimates for norms of random matrices.
\newblock {\em Electron. J. Probab.}, 29:Paper No. 97, 19, 2024.

\bibitem{latala2024operator}
Rafa\l{} Lata\l{}a and Marta Strzelecka.
\newblock Operator {$\ell_p\to \ell_q$} norms of random matrices with iid
  entries.
\newblock {\em J. Funct. Anal.}, 288(3):Paper No. 110720, 38, 2025.

\bibitem{latala2025operator}
Rafał Latała and Marta Strzelecka.
\newblock Operator $\ell_p\to\ell_q$ norms of {G}aussian matrices.
\newblock {Preprint, arXiv:2502.02186}.

\bibitem{madry2017towards}
Aleksander Madry, Aleksandar Makelov, Ludwig Schmidt, Dimitris Tsipras, and
  Adrian Vladu.
\newblock Towards deep learning models resistant to adversarial attacks.
\newblock {Preprint, arXiv:1706.06083}.

\bibitem{mountford2013lyapunov}
Thomas Mountford and Jean-Christophe Mourrat.
\newblock Lyapunov exponents of random walks in small random potential: the
  lower bound.
\newblock {\em Comm. Math. Phys.}, 323(3):1071--1120, 2013.

\bibitem{neyshabur2018pac}
Behnam Neyshabur, Srinadh Bhojanapalli, and Nathan Srebro.
\newblock A {PAC}-{B}ayesian approach to spectrally-normalized margin bounds
  for neural networks.
\newblock In {\em International Conference on Learning Representations}, 2018.

\bibitem{neyshabur2015norm}
Behnam Neyshabur, Ryota Tomioka, and Nathan Srebro.
\newblock Norm-based capacity control in neural networks.
\newblock In {\em Proceedings of The 28th Conference on Learning Theory},
  volume~40 of {\em Proceedings of Machine Learning Research}, pages
  1376--1401, Paris, France, 03--06 Jul 2015.

\bibitem{rio}
Emmanuel Rio.
\newblock Upper bounds for minimal distances in the central limit theorem.
\newblock {\em Ann. Inst. Henri Poincar\'e{} Probab. Stat.}, 45(3):802--817,
  2009.

\bibitem{rosenthal1970subspaces}
Haskell~P. Rosenthal.
\newblock On the subspaces of {$L\sp{p}$} {$(p>2)$} spanned by sequences of
  independent random variables.
\newblock {\em Israel J. Math.}, 8:273--303, 1970.

\bibitem{shi2022efficiently}
Zhouxing Shi, Yihan Wang, Huan Zhang, J~Zico Kolter, and Cho-Jui Hsieh.
\newblock Efficiently computing local {Lipschitz} constants of neural networks
  via bound propagation.
\newblock {\em Advances in Neural Information Processing Systems},
  35:2350--2364, 2022.

\bibitem{szegedy2013intriguing}
Christian Szegedy, Wojciech Zaremba, Ilya Sutskever, Joan Bruna, Dumitru Erhan,
  Ian Goodfellow, and Rob Fergus.
\newblock Intriguing properties of neural networks.
\newblock {Preprint, arXiv:1312.6199}.

\bibitem{virmaux2018lipschitz}
Aladin Virmaux and Kevin Scaman.
\newblock Lipschitz regularity of deep neural networks: analysis and efficient
  estimation.
\newblock {\em Advances in Neural Information Processing Systems}, 31, 2018.

\bibitem{von1965inequalities}
Bengt von Bahr and Carl-Gustav Esseen.
\newblock Inequalities for the {$r$}th absolute moment of a sum of random
  variables, {$1\leq r\leq 2$}.
\newblock {\em Ann. Math. Statist.}, 36:299--303, 1965.

\end{thebibliography}

\end{document}